\def\beq{\begin{equation}}
\def\eeq{\end{equation}}
\theoremstyle{definition}
\newtheorem{definition}{Definition}
\newtheorem{example}{Example}
\theoremstyle{plain}
\newtheorem{theorem}{Theorem}
\newtheorem{proposition}{Proposition}
\newtheorem{claim}{Claim}
\newtheorem{corollary}{Corollary}
\newtheorem{lemma}{Lemma}
\numberwithin{theorem}{section}
\numberwithin{definition}{section}
\numberwithin{claim}{section}
\numberwithin{lemma}{section}
\numberwithin{conjecture}{section}
\numberwithin{corollary}{section}
\numberwithin{equation}{section}
\numberwithin{example}{section}
\begin{document}

\title{Almost intersecting families}

\author{Peter Frankl\thanks{R\'enyi Institute, Budapest, Hungary and MIPT, Moscow.}
\ and Andrey Kupavskii\thanks{MIPT, Moscow, IAS, Princeton and CNRS, Grenoble}}

\date{}

\maketitle

\begin{abstract}
Let $n > k > 1$ be integers, $[n] = \{1, \ldots, n\}$.
Let $\mathcal F$ be a family of $k$-subsets of~$[n]$.
The family $\mathcal F$ is called \emph{intersecting} if $F \cap F' \neq \emptyset$ for all $F, F' \in \mathcal F$.
It is called \emph{almost intersecting} if it is \emph{not} intersecting but to every $F \in \mathcal F$ there is at most one $F'\in \mathcal F$ satisfying $F \cap F' = \emptyset$.
Gerbner et al.\ \cite{GLPPS} proved that if $n \geq 2k + 2$ then $|\mathcal F| \leq {n - 1\choose k - 1}$ holds for almost  intersecting families.
The main result (Theorem \ref{th:1.6}) implies the considerably stronger and best possible bound $|\mathcal F| \leq {n - 1\choose k - 1} - {n - k - 1\choose k - 1} + 2$ for $n > (2 + o(1))k$.
\end{abstract}

\section{Introduction}
\label{sec:1}

Let $[n] = \{1, \dots, n\}$ be the standard $n$-element set, $2^{[n]}$ its power set and ${[n]\choose k}$ the collection of all its $k$-subsets.
Subsets of $2^{[n]}$ are called \emph{families}.

A family $\mathcal F$ is called \emph{intersecting} if $F \cap G \neq \emptyset$ for all $F, G \in \mathcal F$.
One of the fundamental results in extremal set theory is the Erd\H{o}s--Ko--Rado Theorem:

\begin{theorem}[\cite{EKR}]
\label{th:1.1}
Suppose that $\mathcal F \subset{[n]\choose k}$ is intersecting, $n \geq 2k > 0$.
Then
\beq
\label{eq:1.1}
|\mathcal F| \leq {n - 1\choose k - 1}.
\eeq
\end{theorem}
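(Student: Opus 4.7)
The plan is to use Katona's elegant cyclic permutation argument, which gives one of the cleanest proofs of Theorem~\ref{th:1.1}. It proceeds via a small combinatorial lemma about arcs on a cycle, followed by a double count over all cyclic arrangements of $[n]$.

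First I would establish the following cyclic lemma: for any cyclic ordering of $[n]$, among the $n$ arcs of $k$ consecutive elements, any intersecting subfamily has size at most $k$. Fix a cyclic ordering and suppose some arc $A_0 = \{a_1, \dots, a_k\}$ (consecutive in the ordering) lies in the subfamily. The only other arcs meeting $A_0$ are those starting at one of $a_2, \dots, a_k$ or ending at one of $a_1, \dots, a_{k-1}$, a total of $2(k-1)$ arcs. I would pair each arc starting at $a_i$ (for $2 \le i \le k$) with the arc ending at $a_{i-1}$; the hypothesis $n \ge 2k$ guarantees the two arcs in each such pair are disjoint, since together they span $2k$ consecutive positions in the cyclic ordering. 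Since the subfamily is intersecting, at most one arc from each of these $k-1$ pairs can lie in it, yielding a total of at most $1 + (k-1) = k$ arcs.

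Next I would double count pairs $(\sigma, A)$ where $\sigma$ is one of the $(n-1)!$ cyclic orderings of $[n]$ and $A \in \mathcal F$ is a $k$-arc in $\sigma$. The cyclic lemma bounds this count by $k \cdot (n-1)!$. On the other hand, each fixed $F \in \mathcal F$ appears as an arc in exactly $k!\,(n-k)!$ cyclic orderings (arrange the elements of $F$ consecutively in $k!$ ways and interleave the remaining $n-k$ elements in $(n-k)!$ ways). Hence
\[
|\mathcal F|\, k!\,(n-k)! \;\le\; k \,(n-1)!,
\]
which rearranges to $|\mathcal F| \le \binom{n-1}{k-1}$.

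The main obstacle lies in the cyclic lemma: one must check that $n \ge 2k$ is exactly what forces the paired arcs to be disjoint. If $n < 2k$, then any two cyclic $k$-arcs already intersect, so the lemma, and indeed the EKR bound itself, fails. Once the lemma is in hand, the double count is entirely routine.
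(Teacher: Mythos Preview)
Your argument is Katona's cyclic permutation proof and it is correct as written: the pairing of arcs in the cyclic lemma is sound (the hypothesis $n\ge 2k$ is exactly what makes each pair disjoint), and the double count over the $(n-1)!$ cyclic orderings gives $|\mathcal F|\,k!\,(n-k)!\le k\,(n-1)!$, i.e.\ $|\mathcal F|\le\binom{n-1}{k-1}$.

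There is nothing to compare against, however: the paper does not prove Theorem~\ref{th:1.1} at all. It is stated as a classical background result with a citation to \cite{EKR}, and the paper's own contributions begin with the almost-intersecting refinements (Theorems~\ref{th:1.3} and~\ref{th:1.6}). So your proof is a valid addition, not an alternative to anything in the text.
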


Gerbner et al.\ \cite{GLPPS} proved an interesting generalisation of \eqref{eq:1.1}.
To state it we need a definition.

\setcounter{definition}{1}
\begin{definition}
\label{def:1.2}
A family $\mathcal F \subset 2^{[n]}$ is called \emph{almost intersecting} if it is \emph{not} intersecting, but to every $F \in \mathcal F$ there is at most one $G \in \mathcal F$ satisfying $F \cap G = \emptyset$.
\end{definition}

\setcounter{theorem}{2}
\begin{theorem}[\cite{GLPPS}]
\label{th:1.3}
Suppose that $n \geq 2k + 2$, $k \geq 1$, $\mathcal F \subset {[n]\choose k}$.
If $\mathcal F$ is intersecting or almost intersecting then \eqref{eq:1.1} holds.
\end{theorem}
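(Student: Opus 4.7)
\emph{Proof plan.} The intersecting case is Theorem~1.1. Assume $\mathcal F$ is almost intersecting. Its disjointness graph on $\mathcal F$ has maximum degree at most $1$, so it decomposes as a matching $\{(A_i,B_i)\}_{i=1}^m$ (with $m\ge 1$ since $\mathcal F$ is not intersecting) together with an independent set $\mathcal I$. Since the only disjoint pairs in $\mathcal F$ are the $(A_i,B_i)$'s, every $F\in\mathcal I$ meets every $A_i$ and $B_i$, both $\{A_i\}$ and $\{B_i\}$ are intersecting, and $A_i\cap B_j\ne\emptyset$ for $i\ne j$. Hence
\[
\mathcal G_A:=\mathcal I\cup\{A_1,\dots,A_m\}\ \text{is intersecting},\qquad |\mathcal F|=|\mathcal G_A|+m,
\]
so the goal reduces to proving $|\mathcal G_A|\le \binom{n-1}{k-1}-m$.

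My plan is to run a Hilton--Milner dichotomy on $\mathcal G_A$. \textbf{Non-star case:} if $\mathcal G_A$ has no common element, Hilton--Milner gives $|\mathcal G_A|\le \binom{n-1}{k-1}-\binom{n-k-1}{k-1}+1$, after which it remains to show $m\le \binom{n-k-1}{k-1}-1$. \textbf{Substar case:} if $\mathcal G_A\subset\{F\ni x\}$ for some $x$, then $x\in A_i$ for every $i$ and $x\notin B_i$. The key observation is that for each $j$, any $k$-set $C\ni x$ disjoint from $B_j$ and different from $A_j$ must lie outside $\mathcal G_A$: if $C\in\mathcal I$ then $C$ is disjoint from $B_j$ (contradicting that $\mathcal I$ meets $B_j$), and if $C=A_i$ with $i\ne j$ then $(A_i,B_j)$ is a second disjoint pair touching $B_j$. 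Consequently the $\binom{n-k-1}{k-1}-1$ sets
\[
\mathcal M_j:=\{C:\ x\in C,\ C\cap B_j=\emptyset,\ C\ne A_j\}
\]
are missing from $\mathcal G_A$, giving $|\mathcal G_A|\le \binom{n-1}{k-1}-|\bigcup_{j=1}^m\mathcal M_j|$, and it remains to prove $|\bigcup_j\mathcal M_j|\ge m$. As a byproduct $\mathcal G_A$ cannot be the \emph{full} star $\{F\ni x\}$: that would put the $\binom{n-k-1}{k-1}\ge 2$ sets $C\ni x$ disjoint from any fixed $B_j$ into $\mathcal F$ as disjoint partners of $B_j$.

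The main technical obstacle is precisely the pair of combinatorial inequalities $m\le \binom{n-k-1}{k-1}-1$ (non-star case) and $|\bigcup_j\mathcal M_j|\ge m$ (substar case). Both are of Bollob\'as flavor: they exploit $A_i\cap B_i=\emptyset$ together with $A_i\cap B_j\ne\emptyset$ for $i\ne j$ and the auxiliary intersecting structure of $\{A_i\}$, $\{B_i\}$, and they require $\binom{n-k-1}{k-1}$ to be big enough to absorb $m$, which is where the hypothesis $n\ge 2k+2$ is invoked. With these inequalities in hand, both branches of the dichotomy deliver $|\mathcal F|\le \binom{n-1}{k-1}$, as required.
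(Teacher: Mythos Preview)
Theorem~\ref{th:1.3} is not proved in this paper; it is quoted from \cite{GLPPS}, and the paper's own contribution is the sharper Theorem~\ref{th:1.6}, valid only for considerably larger~$n$. So there is no in-paper proof to compare against, but your plan can still be assessed on its own merits.

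The plan has a genuine gap. In the \textbf{non-star branch} you reduce to the inequality $m\le\binom{n-k-1}{k-1}-1$, but this inequality is false at the bottom of the range. Take $n=2k+2$ and let $\mathcal F=\binom{[2k]}{k}$; then $m=\binom{2k-1}{k-1}$, and one can choose the transversal $\{A_i\}$ (one set from each complementary pair) to be a non-star intersecting family. Here the required bound would read $\binom{2k-1}{k-1}\le\binom{k+1}{2}-1$, which fails already for $k=3$ and fails by an exponential factor for large~$k$. The Bollob\'as bound \eqref{eq:2.1} only gives $m\le\binom{2k-1}{k-1}$, which is exactly the wrong size, so ``Bollob\'as flavor'' alone cannot close this gap. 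Indeed, even the paper's much heavier machinery (Theorem~\ref{th:2.3} plus Kruskal--Katona) does not reach $n=2k+2$; see the restrictions in Theorem~\ref{th:1.6}.

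The \textbf{substar branch} is also not yet a proof. A small computation shows $\bigcup_j\mathcal M_j=U\setminus\{A_1,\dots,A_m\}$ where $U=\{C\ni x:\exists j,\ C\cap B_j=\emptyset\}$ (no $A_i$ lies in any $\mathcal M_j$, by the cross-intersection $A_i\cap B_j\ne\emptyset$). Since each $A_j\in U$, your needed inequality $|\bigcup_j\mathcal M_j|\ge m$ is equivalent to $|U|\ge 2m$, which is a statement about cross-intersecting families $(\mathcal F_0(x),\{B_j\})$ and requires something like the Kruskal--Katona/Hilton argument of Theorem~\ref{th:2.4} and Corollaries~\ref{cor:2.5}--\ref{cor:2.6}. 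That is the right ingredient for this branch, but it is not the Bollob\'as inequality, and you still need to supply it.

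In short: the Hilton--Milner dichotomy is too coarse for $n$ close to $2k$. The savings $\binom{n-k-1}{k-1}-1$ from Hilton--Milner is polynomial in $k$ at $n=2k+2$, while $m$ can be exponential, so the non-star branch cannot be completed as written.
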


A natural example of almost intersecting families is ${[2k]\choose k}$.
For $n = 2k$ and $2k + 1$ the best possible bound $|\mathcal F| \leq {2k\choose k}$ is proven in \cite{GLPPS}.

To present another example let us first define some $k$-uniform intersecting families.
For integers $1 \leq a \leq b \leq n$ set $[a,b] = \{a, a + 1, \ldots, b\}$.
For a fixed $x \in [n]$ let $\mathcal S = \mathcal S(n, k, x)$ be the full star with center in $x$, i.e., $\mathcal S = \left\{S \in {[n]\choose k}: x \in S\right\}$.
Every non-empty family $\mathcal F \subset \mathcal S$ for some $x$ is called a star.

For $3 \leq r \leq k + 1$ let us define
$$
\aligned
\mathcal B_r = \mathcal B_r(n,k) &= \left\{B \in {[n]\choose k} : 1 \in B, \, B \cap [2,r] \neq \emptyset\right\} \cup\\
&\quad \cup \left\{B \in {[n]\choose k} : 1 \notin B, [2,r] \subset B\right\}.
\endaligned
$$
Obviously, $|B_r| = {n - 1\choose k - 1} - {n - r\choose k - 1} + {n - r\choose k - r + 1}$.
In particular, $|\mathcal B_3| = |\mathcal B_4|$.
For $n > 2k$ one has
$$
|\mathcal B_4| < |\mathcal B_5| < \ldots < |\mathcal B_{k + 1}|.
$$
The family $\mathcal B_{k + 1}$ is called the Hilton--Milner family.
It has a single set, namely $[2, k + 1]$, which does not contain~$1$.

For $x,y \in [n]$ let us recall the standard notation:
\begin{align*}
\mathcal F(x) &= \{F \setminus \{x\} : x \in F \in \mathcal F\}, \mathcal F(\bar x) = \{F \in \mathcal F : x \notin F\},\\ \mathcal F(x, \bar y) &= \mathcal F(\bar y, x) = \{F \setminus \{x\}: x \in F \in \mathcal F, y \notin F\}.
\end{align*}
The \emph{maximum degree} $\Delta(\mathcal F)$ of a family $\mathcal F \subset 2^{[n]}$ is $\max\{|\mathcal F(x)| : x \in [n]\}$.
For $3 \leq r \leq k + 1$,
$$
\Delta(\mathcal B_r) = {n - 1\choose k - 1} - {n - r\choose k - 1} = {n - 2\choose k - 2} + \ldots + {n - r\choose k - 2} = |\mathcal B_r(1)|.
$$

Hilton and Milner \cite{HM} proved the following stability result for intersecting families. (This theorem has many proofs, see e.g. \cite{KZ}.)

\begin{theorem}[\cite{HM}]
\label{th:1.4}
Suppose that $n > 2k \geq 4$, $\mathcal F \subset {[n]\choose k}$ is intersecting, but $\mathcal F$ is not a star (not contained in a full star).
Then
\beq
\label{eq:1.2}
|\mathcal F| \leq |\mathcal B_{k + 1}|,
\eeq
moreover, equality holds only if $\mathcal F$ is isomorphic to $\mathcal B_{k + 1}$ or $k = 3$ and $\mathcal F$ is isomorphic to $\mathcal B_3$.
\end{theorem}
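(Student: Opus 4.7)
The plan is to prove Theorem~\ref{th:1.4} by the shifting (compression) method, reducing to a direct counting argument once the extremal structure is isolated. I would first compress $\mathcal F$ via the standard operators $s_{ij}$, $i<j$, to obtain a shifted family, while preserving $|\mathcal F|$ and the intersecting property. A subtle point is that shifting can convert a non-star family into a star; this is handled by observing that whenever the last shift in the sequence collapses $\mathcal F$ into a star, the pre-shift $\mathcal F$ is (isomorphic to) one of the conjectured extremal families, so the bound holds for these edge cases directly.

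Next, assume $\mathcal F$ is shifted, intersecting, and not a star. I claim $[2,k+1]\in\mathcal F$: any $F\in\mathcal F$ with $1\notin F$ can be reduced coordinate-by-coordinate via shiftedness (replace each element by the smallest available one), yielding the lex-minimal non-star witness $[2,k+1]\in\mathcal F$. The intersecting property then forces every $F'\in\mathcal F$ to meet $[2,k+1]$, so a first-order count of the $k$-sets containing $1$ and meeting $[2,k+1]$ gives the preliminary bound $|\mathcal F(1)|\leq\binom{n-1}{k-1}-\binom{n-k-1}{k-1}$.

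The heart of the argument is to show $\mathcal F\subset\mathcal B_r$ for some $r\in[3,k+1]$, by analysing how $\mathcal F(\bar 1)$ constrains $\mathcal F(1)$ through the cross-intersecting property combined with shift-stability. Intuitively, $r$ records the ``rigidity'' of the non-star witnesses: when $\mathcal F(\bar 1)$ consists solely of $[2,k+1]$, we have $r=k+1$; if additional non-$1$ sets are present, shiftedness propagates these constraints and in turn forces every $F\in\mathcal F(1)$ to meet a shorter initial segment $[2,r]$, giving $\mathcal F\subset\mathcal B_r$ with $r<k+1$. Combined with the monotonicity $|\mathcal B_3|=|\mathcal B_4|<|\mathcal B_5|<\cdots<|\mathcal B_{k+1}|$ stated in the introduction, this immediately yields $|\mathcal F|\leq|\mathcal B_{k+1}|$.

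For the equality analysis, one reverses the compressions to identify the extremal configurations. Strict inequality $|\mathcal B_r|<|\mathcal B_{k+1}|$ for $3\le r\le k$, with the tie $|\mathcal B_3|=|\mathcal B_4|$ arising only at $k=3$, forces $r=k+1$, and at $k=3$ also admits the exceptional $r=3$ solution, so the extremum is $\mathcal F\cong\mathcal B_{k+1}$ in general and additionally $\mathcal F\cong\mathcal B_3$ when $k=3$. The main obstacles I expect are (i) the structural claim $\mathcal F\subset\mathcal B_r$, whose proof requires delicately combining intersectingness with shift-stability, and (ii) faithfully tracking the equality case through the sequence of compressions to recover the $k=3$ exception and to rule out any other compressed images.
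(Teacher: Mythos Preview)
The paper does not prove Theorem~\ref{th:1.4}; it is quoted from Hilton--Milner \cite{HM} as a tool, with a pointer to \cite{KZ} for alternative proofs. So there is no ``paper's own proof'' to compare against, and your proposal must be judged on its own merits.

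Your steps up to and including the bound $|\mathcal F(1)|\le\binom{n-1}{k-1}-\binom{n-k-1}{k-1}$ are the standard opening of a shifting proof and are correct. The central structural claim, however, is false: it is \emph{not} true that every shifted intersecting non-star family satisfies $\mathcal F\subset\mathcal B_r$ for some $r\in[3,k+1]$. For $k=3$ take
\[
\mathcal F'=\bigl\{\{1,2,3\},\{1,2,4\},\{1,3,4\},\{2,3,4\},\{1,2,5\},\{1,3,5\},\{1,4,5\},\{2,3,5\}\bigr\}.
\]
One checks directly that $\mathcal F'$ is shifted and intersecting. But $\{1,4,5\}\notin\mathcal B_3$ (it misses $\{2,3\}$) while $\{2,3,5\}\notin\mathcal B_4$ (it does not contain $[2,4]$), so $\mathcal F'\not\subset\mathcal B_r$ for any $r\in\{3,4\}$. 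Thus the ``heart of the argument'' as you have stated it does not go through, and the monotonicity $|\mathcal B_3|=|\mathcal B_4|<\cdots<|\mathcal B_{k+1}|$ cannot be invoked in this way.

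What does work after shifting is to bound $|\mathcal F(\bar 1)|$ and $|\mathcal F(1)|$ separately rather than to force containment in a single $\mathcal B_r$. For instance, one shows (using shiftedness together with the intersecting property) that every $F\in\mathcal F(\bar 1)$ lies inside $[2,2k]$ and that $\mathcal F(\bar 1)$ and the complements $\{[2,2k]\setminus F:F\in\mathcal F(\bar 1)\}$ are cross-intersecting $(k-1)$-uniform families on $[2,2k]$, whence $|\mathcal F(\bar 1)|\le\binom{2k-2}{k-2}$; a short comparison then yields $|\mathcal F(\bar 1)|-1\le\binom{n-k-1}{k-1}-\binom{n-k-1}{k-1}+\ldots$ (more precisely, an injection or Kruskal--Katona argument gives $|\mathcal F(\bar 1)|\le 1+\bigl(\binom{n-1}{k-1}-\binom{n-k-1}{k-1}-|\mathcal F(1)|\bigr)$), and summing recovers the Hilton--Milner bound with the correct equality cases. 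Your outline would need to be rewritten along these lines; the containment $\mathcal F\subset\mathcal B_r$ is too strong to hold and should be replaced by a size comparison.
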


\setcounter{example}{4}
\begin{example}
\label{ex:1.5}
Let $B \subset {[n]\choose k}$ be an arbitrary set satisfying $1 \in B$, $B \cap [2, k + 1] = \emptyset$.
Set $\mathcal B^+ = \mathcal B_{k + 1} \cup \{B\}$.
Then $|\mathcal B^+| = |\mathcal B_{k + 1}| + 1$ and $\mathcal B^+$ is almost intersecting.
\end{example}

Our main result is the following.

\setcounter{theorem}{5}
\begin{theorem}
\label{th:1.6}
Suppose that $\mathcal F \subset {[n]\choose k}$ is almost intersecting, $k \geq 3$.
Then
\beq
\label{eq:1.3}
|\mathcal F| \leq |\mathcal B^+|={n-1\choose k-1}-{n-k-1\choose k-1}+2
\eeq
holds in the following cases:
\begin{itemize}
\item[{\rm (i)}] \ $k = 3$, $n \geq 13$,

\item[{\rm (ii)}] \ $k \geq 4$, $n \geq 3k + 3$,

\item[{\rm (iii)}] \ $k \geq 10$, $n > 2k + 2\sqrt{k} + 4$.
\end{itemize}
Moreover, equality in \eqref{eq:1.3} is only possible when $\mathcal F$ is isomorphic to $\mathcal B^+$. 
\end{theorem}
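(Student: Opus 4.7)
The plan is to analyze $\mathcal F$ via its disjointness graph, whose maximum degree is at most one by hypothesis; hence this graph is a matching of some size $m \ge 1$ with edges $(A_1, B_1), \ldots, (A_m, B_m)$. Let $\mathcal I = \mathcal F \setminus \bigcup_i \{A_i, B_i\}$. Then $\mathcal I$ is intersecting, every $F \in \mathcal I$ meets every member of $\mathcal F$, the sets $A_i$ (resp.\ $B_i$) are pairwise intersecting, and $A_i \cap B_j \ne \emptyset$ for $i \ne j$. I aim to bound $|\mathcal F| = |\mathcal I| + 2m$ and identify equality.

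\emph{Case $m = 1$:} let $(A, B)$ be the disjoint pair. Apply Theorem~\ref{th:1.4} to $\mathcal I$. If $\mathcal I$ is a star with centre $x$, the constraints $F \cap A \ne \emptyset$ and $F \cap B \ne \emptyset$ for every $F \in \mathcal I$, combined with $A \cap B = \emptyset$, yield via inclusion-exclusion $|\mathcal I| \le \binom{n-1}{k-1} - \binom{n-k-1}{k-1}$ when $x \in A \cup B$, and a strictly smaller bound when $x \notin A \cup B$. Equality forces $\mathcal I = \{F \ni x : F \cap B \ne \emptyset\}$, which puts $\mathcal F$ in the form of Example~\ref{ex:1.5}, i.e.\ $\mathcal F \cong \mathcal B^+$. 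If $\mathcal I$ is not a star, Theorem~\ref{th:1.4} gives $|\mathcal I| \le |\mathcal B_{k+1}|$; the extremal configuration ($\mathcal I \cong \mathcal B_{k+1}$, or $\cong \mathcal B_3$ for $k = 3$) is ruled out because one verifies directly that any $k$-set meeting every member of $\mathcal B_{k+1}$ (resp.\ $\mathcal B_3$) must itself lie in that family, contradicting $A \in \mathcal F \setminus \mathcal I$. A similar inspection rules out the near-extremal value $|\mathcal I| = |\mathcal B_{k+1}| - 1$, so strict inequality holds in this subcase.

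\emph{Case $m \ge 2$:} I show $|\mathcal F| < |\mathcal B^+|$. Apply Theorem~\ref{th:1.4} to the intersecting family $\mathcal G = \mathcal I \cup \{A_1, \ldots, A_m\}$. If $\mathcal G$ is a star at $x$, then $x \in A_i$ and hence $x \notin B_i$ for every $i$; $\mathcal I$ is itself a star at $x$ whose members must meet each $B_i$. Since $B_1 \ne B_2$ we have $|B_1 \cup B_2| \ge k + 1$, and two-variable inclusion-exclusion on $B_1, B_2$ gives
\beq
|\mathcal I| \le \binom{n-1}{k-1} - \binom{n-k-1}{k-1} - \binom{n-k-2}{k-2}.
\eeq
Combined with the Bollob\'as set-pair bound $m \le \binom{2k-1}{k-1}$ applied to the cross-intersecting pairs $(A_i \setminus \{x\}, B_i)$, this yields the result under the hypotheses (i)-(iii). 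If $\mathcal G$ is not a star, Theorem~\ref{th:1.4} gives $|\mathcal I| + m \le |\mathcal B_{k+1}|$; by considering also the variants of $\mathcal G$ obtained by swapping some $A_i$ for $B_i$ (each still intersecting by the cross-intersecting property) and observing that consistency of the star centres across all such variants would force $\mathcal I$ into the intersection of two distinct stars—contradicting the non-star hypothesis—one closes the remaining gap via a stability refinement of Theorem~\ref{th:1.4}.

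The main obstacle is Case $m \ge 2$: a naive Hilton-Milner bound on $\mathcal G$ loses an additive $\Theta(m)$ slack, so the argument must combine an inclusion-exclusion estimate over the pairwise intersecting $B_i$'s (in the star subcase) with a stability strengthening of Theorem~\ref{th:1.4} (in the non-star subcase), and the numerical hypotheses on $n$ relative to $k$ in (i)-(iii) are precisely what is needed for these quantitative estimates to beat the constant slack.
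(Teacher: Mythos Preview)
Your outline has two genuine gaps, both in the case $m\ge 2$, and the approach as written cannot be repaired without importing a much stronger tool than the Hilton--Milner theorem.

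\textbf{The star subcase of $m\ge2$ does not give the claimed numerics.} Your inclusion--exclusion bound
\[
|\mathcal I|\le\binom{n-1}{k-1}-\binom{n-k-1}{k-1}-\binom{n-k-2}{k-2}
\]
is correct, but combining it with $m\le\binom{2k-1}{k-1}$ requires $2\binom{2k-1}{k-1}<\binom{n-k-2}{k-2}+2$. This fails throughout the ranges (i) and (ii). For instance at $k=3$, $n=13$ one needs $m\le 4$ while Bollob\'as only gives $m\le 10$; at $k=4$, $n=15$ one needs $m\le 18$ while Bollob\'as gives $m\le 35$. Using only two of the $B_i$ in the inclusion--exclusion throws away too much, and there is no cheap fix: the $B_i$ are pairwise intersecting, so even using all of them the union $\bigcup B_i$ can be as small as $k+m-1$, which does not close the gap.

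\textbf{The non-star subcase of $m\ge2$ is not a proof.} Hilton--Milner gives $|\mathcal I|+m\le|\mathcal B_{k+1}|$, hence $|\mathcal F|\le|\mathcal B_{k+1}|+m$, which exceeds $|\mathcal B^+|$ by $m-1\ge 1$. You acknowledge this and invoke an unspecified ``stability refinement of Theorem~\ref{th:1.4}''. No such refinement is stated, and the swapping argument you sketch does not produce one: if $\mathcal G=\mathcal I\cup\{A_1,\dots,A_m\}$ is not a star, the variants obtained by swapping some $A_i$ for $B_i$ may all fail to be stars as well, and nothing follows. This is precisely the hard case, and it is where the paper brings in a genuinely different tool: Frankl's degree form of the Erd\H os--Ko--Rado/Hilton--Milner theorem (Theorem~\ref{th:2.3} here), which bounds $|\mathcal A|\le|\mathcal B_r|$ whenever $\Delta(\mathcal A)\le\Delta(\mathcal B_r)$. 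The paper's proof splits on the value of $\Delta(\mathcal F_0)$, chooses a tail $\mathcal T$ avoiding the high-degree vertex so that $\Delta(\mathcal F_0\cup\mathcal T)=\Delta(\mathcal F_0)$, and applies Theorem~\ref{th:2.3} with a carefully chosen $r$; the cross-intersecting corollaries of Kruskal--Katona (Corollaries~\ref{cor:2.5}--\ref{cor:2.6}) then supply the matching upper bound on $\ell$ in terms of $r$. This interplay between the degree parameter $r$ and the tail length $\ell$ is what makes the arithmetic close, and it has no counterpart in your outline.

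A smaller issue: in the $m=1$ non-star subcase you apply Hilton--Milner to $\mathcal I$ rather than to $\mathcal I\cup\{A\}$, which forces you to rule out $|\mathcal I|=|\mathcal B_{k+1}|-1$ by a ``similar inspection''. That is a nontrivial stability statement (there are non-star intersecting families of this size other than $\mathcal B_{k+1}$ minus a set). The paper avoids this entirely by applying Hilton--Milner to $\mathcal F_0\cup\{P_1\}$ directly, getting $|\mathcal F|\le|\mathcal B_{k+1}|+1=|\mathcal B^+|$ in one step.
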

In what follows, we omit floor and ceiling signs whenever they do not affect the calculations.

The case $k = 2$ is easy.
Suppose that $\mathcal G \subset {[n]\choose 2}$ is almost intersecting and let $F, G \in \mathcal G$ be pairwise disjoint.
Set $X = F \cup G$ and note $|X| = 4$.

\setcounter{claim}{6}
\begin{claim}
\label{cl:1.7}
$\mathcal G \subset {X\choose 2}$.
\end{claim}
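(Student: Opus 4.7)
The plan is a short case analysis: suppose for contradiction that some $H \in \mathcal G$ satisfies $H \not\subset X$. Since $|H| = 2$ and $X = F \cup G$ has size $4$, the assumption $H \not\subset X$ forces $|H \cap X| \leq 1$. I would then split on this cardinality.

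First, if $H \cap X = \emptyset$, then $H$ is disjoint from both $F$ and $G$. But then $F$ has two distinct disjoint partners in $\mathcal G$, namely $G$ and $H$, contradicting the almost intersecting property applied to $F$. Second, if $|H \cap X| = 1$, then the unique element of $H \cap X$ lies in exactly one of $F$ or $G$; say it lies in $F$, so that $H \cap G = \emptyset$. Then $G$ is disjoint from both $F$ and $H$, again producing two distinct disjoint partners for $G$ and contradicting almost intersection. The symmetric subcase is identical. Hence every $H \in \mathcal G$ satisfies $H \subset X$, i.e.\ $\mathcal G \subset \binom{X}{2}$.

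There is no real obstacle here: the argument uses only that $|F| = |G| = 2$, that $F$ and $G$ are disjoint, and the definition of almost intersecting. The only thing to be careful about is to observe that the ``new'' set $H$ is genuinely distinct from $F$ and $G$ (which is automatic since $H \not\subset X$ while $F, G \subset X$), so that the two disjoint partners produced in each case are really two different members of $\mathcal G$.
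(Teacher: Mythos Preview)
Your proof is correct and follows essentially the same idea as the paper's: any $H\in\mathcal G$ other than $F,G$ must intersect both $F$ and $G$, and since $|H|=2$ this forces $H\subset F\cup G=X$. The paper states this directly rather than by contradiction, but the content is identical.
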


\begin{proof}
If $\mathcal G = \{F, G\}$ then we have nothing to prove.
On the other hand, for any further edge $H \in \mathcal G$, both $F \cap H$ and $G \cap H$ must be non-empty.
Since $|H| = 2$, $H \subset X$ follows.
\end{proof}
Note that the family ${[4]\choose 2}$ is the (unique, up to a permutation) extremal example in this case.

Let us make two simple but important observations.

\setcounter{proposition}{7}
\begin{proposition}
\label{pr:1.8}
Let $\mathcal F \subset {[n]\choose k}$ be almost intersecting.
Then there is a unique partition $\mathcal F = \mathcal F_0 \sqcup \mathcal P_1 \sqcup \ldots \sqcup \mathcal P_\ell$ where $\mathcal F_0$ is intersecting ($\mathcal F_0 = \emptyset$ is allowed) and for $1 \leq i \leq \ell$, $\mathcal P_i = \{P_i, Q_i\}$ with $P_i \cap Q_i = \emptyset$.
\end{proposition}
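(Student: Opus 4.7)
The natural approach is to encode the disjointness relation as a graph and observe that the almost intersecting condition forces this graph to be a matching. Specifically, I would define the auxiliary graph $G_{\mathcal F}$ on vertex set $\mathcal F$, with an edge between $F$ and $F'$ precisely when $F \cap F' = \emptyset$. The defining property of ``almost intersecting'' translates verbatim into the statement that every vertex of $G_{\mathcal F}$ has degree at most $1$, which is exactly the condition that $G_{\mathcal F}$ is a (possibly empty) matching, together with the extra requirement that $G_{\mathcal F}$ has at least one edge (since $\mathcal F$ is not intersecting).

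Given this, the partition writes itself. Let $\{P_i, Q_i\}$, $1 \leq i \leq \ell$, enumerate the edges of the matching $G_{\mathcal F}$, and let $\mathcal F_0$ be the set of isolated vertices of $G_{\mathcal F}$. Then $\mathcal F = \mathcal F_0 \sqcup \mathcal P_1 \sqcup \ldots \sqcup \mathcal P_\ell$ is a disjoint decomposition, with $\ell \geq 1$. The fact that $\mathcal F_0$ is intersecting follows directly: if $F, F' \in \mathcal F_0$ were disjoint, they would be joined by an edge in $G_{\mathcal F}$, contradicting that both are isolated.

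For uniqueness, observe that the graph $G_{\mathcal F}$ is determined intrinsically by $\mathcal F$, so its set of edges and its set of isolated vertices are intrinsic as well. Hence the pairs $\mathcal P_i$ and the intersecting part $\mathcal F_0$ in any such decomposition must coincide with those constructed above (up to relabeling the index $i$), which is what ``unique partition'' means.

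There is no real obstacle here; the only subtlety is cosmetic, namely pointing out that the decomposition is unique only as an \emph{unordered} collection of parts (the labeling of the pairs $\mathcal P_1, \ldots, \mathcal P_\ell$ is of course arbitrary), and noting that the condition $\mathcal F$ not intersecting is used solely to guarantee $\ell \geq 1$; the proposition would remain true with $\ell \geq 0$ if one dropped this stipulation.
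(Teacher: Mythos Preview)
Your argument is correct. The paper actually gives no proof of Proposition~\ref{pr:1.8} at all; it is stated as a ``simple but important observation'' and left to the reader, so your disjointness-graph formulation is precisely the routine verification the authors are implicitly assuming.
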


The above partition of $\mathcal F$ is called the \emph{canonical} partition.
The function $\ell(\mathcal F) = \ell$ is an important parameter of $\mathcal F$.

\setcounter{definition}{8}
\begin{definition}
\label{def:1.9}
A family $\mathcal T = \{T_1, \ldots, T_\ell\}$ satisfying $T_i \in \mathcal P_i$, is called a full tail (of $\mathcal F$).
\end{definition}

\setcounter{proposition}{9}
\begin{proposition}
\label{pr:1.10}
There are $2^\ell$ full tails $\mathcal T$ and for each of them $\mathcal F_0 \cup \mathcal T$ is intersecting.
\end{proposition}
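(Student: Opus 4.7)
The count of full tails is immediate: by Definition 1.9 a full tail is obtained by picking one of the two sets from each $\mathcal P_i$ independently, giving $2 \cdot 2 \cdots 2 = 2^\ell$ choices in total. So the content of the proposition lies in proving that every such $\mathcal F_0 \cup \mathcal T$ is intersecting.

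The plan is a direct case analysis on the provenance of two sets $A, B \in \mathcal F_0 \cup \mathcal T$. If both lie in $\mathcal F_0$, then $A \cap B \neq \emptyset$ by the hypothesis on $\mathcal F_0$ coming from Proposition 1.8. The two mixed cases (one set in $\mathcal F_0$ and one in $\mathcal T$, or two distinct sets of $\mathcal T$) are both handled by the same key observation, which I would isolate first as the crux of the argument.

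The key observation is the following: for each $T_i \in \mathcal T$ coming from $\mathcal P_i = \{P_i, Q_i\}$, the \emph{other} element of $\mathcal P_i$ already lies in $\mathcal F$ and is disjoint from $T_i$ by construction of the canonical partition. Since $\mathcal F$ is almost intersecting, this is the \emph{only} set in $\mathcal F$ disjoint from $T_i$. Consequently no other member of $\mathcal F$ — in particular no $F_0 \in \mathcal F_0$ and no $T_j$ with $j \neq i$ — can be disjoint from $T_i$. This kills both remaining cases at once.

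There is essentially no obstacle here; the only thing worth flagging is the implicit uniqueness of the canonical partition (so that $\mathcal F_0$, $\mathcal P_i$, $P_i$, $Q_i$ are unambiguously defined), which is already granted by Proposition 1.8. I would therefore write the proof as two short sentences: one announcing the $2^\ell$ count, and one running the case analysis above using the ``at most one disjoint partner'' clause of Definition 1.2.
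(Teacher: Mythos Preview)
Your argument is correct. The paper actually states Proposition~1.10 without proof, treating it as a self-evident consequence of the definitions; your write-up supplies exactly the natural justification (the $2^\ell$ count from independent binary choices, and the observation that the unique disjoint partner of $T_i$ in $\mathcal F$ is the other member of $\mathcal P_i$, which lies neither in $\mathcal F_0$ nor among the $T_j$ for $j\neq i$).
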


Let us close this section by a short proof of \eqref{eq:1.3} for the special case $\ell(\mathcal F) = 1$.

There are two cases to consider according whether the families $\mathcal F_0 \cup \{P_1\}$, $\mathcal F_0\cup\{Q_1\}$ are stars or not.
Suppose first that one of them, say $\mathcal F_0 \cup \{P_1\}$ is not a star.
By Theorem \ref{th:1.4}, $\bigl|\mathcal F_0 \cup \{P_1\}\bigr| = |\mathcal F| - 1 \leq \bigl|\mathcal B_{k + 1}\bigr|$, implying \eqref{eq:1.3}.
For $k \geq 4$ uniqueness in the Hilton--Milner Theorem implies uniqueness in Theorem \ref{th:1.6} as well.
In the case $k = 3$, one has the extra possibility $\mathcal F_0 \cup \{P_1\} = \mathcal B_3$.
However, it is easy to check that adding a new $3$-set to $\mathcal B_3$ will \emph{never} produce an almost intersecting family.

The second case is even easier.
If both $\mathcal F_0 \cup \{P_1\}$ and $\mathcal F_0 \cup \{Q_1\}$ are stars then $P_1 \cap Q_1 = \emptyset$ implies that there are two distinct elements (the centres of the stars) $x, y$ such that $\{x, y\} \subset F$ for all $F \in \mathcal F_0$.
Consequently,
$$
|\mathcal F| = |\mathcal F_0| + 2 \leq {n - 2\choose k - 2} + 2 \leq {n - 2\choose k - 2} + 2{n - 3\choose k - 2} = |\mathcal B_3| \leq |\mathcal B_{k + 1}| < |\mathcal B^+|.
$$

\section{Preliminaries}
\label{sec:2}

Let us first prove an inequality on the size $\ell = \ell(\mathcal F)$ of full tails.

\begin{proposition}
\label{pr:2.1}
\beq
\label{eq:2.1}
\ell(\mathcal F) \leq {2k - 1\choose k - 1}.
\eeq
\end{proposition}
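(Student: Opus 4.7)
The plan is to apply the Bollob\'as set-pair inequality to a doubled collection of ordered pairs. First I would observe that by the almost intersecting hypothesis together with Proposition~\ref{pr:1.8}, $Q_i$ is the unique set of $\mathcal{F}$ disjoint from $P_i$ (and symmetrically for $Q_i$); consequently for $i \neq j$ all four intersections $P_i \cap P_j$, $P_i \cap Q_j$, $Q_i \cap P_j$, $Q_i \cap Q_j$ are non-empty, and in particular each of the families $\{P_1, \ldots, P_\ell\}$ and $\{Q_1, \ldots, Q_\ell\}$ is intersecting.

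Next I would form the $2\ell$ ordered pairs $(A_s, B_s)$ given by $(A_s, B_s) = (P_s, Q_s)$ for $1 \le s \le \ell$ and $(A_s, B_s) = (Q_{s - \ell}, P_{s - \ell})$ for $\ell < s \le 2\ell$, each with $|A_s| = |B_s| = k$, and verify the hypothesis of the Bollob\'as set-pair inequality: namely $A_s \cap B_s = \emptyset$ for every $s$, and $A_s \cap B_t \neq \emptyset$ for every $s \neq t$. The diagonal condition is just $P_i \cap Q_i = \emptyset$. For the off-diagonal condition, when the indices $s, t$ come from opposite orientations of the same underlying pair $i$, the intersection $A_s \cap B_t$ equals $P_i$ or $Q_i$ and so is non-empty; when they come from distinct underlying pairs $i \neq j$, it is one of $P_i \cap P_j$, $P_i \cap Q_j$, $Q_i \cap P_j$, $Q_i \cap Q_j$, all of which are non-empty by the observation above.

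The Bollob\'as set-pair inequality then yields
\[
\frac{2\ell}{\binom{2k}{k}} \;=\; \sum_{s = 1}^{2\ell}\binom{|A_s| + |B_s|}{|A_s|}^{-1} \;\le\; 1,
\]
which rearranges to $\ell \le \binom{2k}{k}/2 = \binom{2k-1}{k-1}$, exactly \eqref{eq:2.1}. The only conceptual obstacle is spotting the doubling trick: a direct application of Bollob\'as to just the $\ell$ pairs $\{(P_i, Q_i)\}_{i = 1}^{\ell}$ would give only the weaker bound $\ell \le \binom{2k}{k}$, and the factor of two improvement is precisely where the individual intersecting properties of the two subfamilies $\{P_i\}$ and $\{Q_i\}$ are used.
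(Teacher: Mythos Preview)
Your proof is correct and follows essentially the same route as the paper: form the $2\ell$ ordered pairs $(P_i,Q_i)$ and $(Q_i,P_i)$, verify the Bollob\'as hypotheses, and conclude $2\ell \le \binom{2k}{k}$. The paper quotes the uniform version of Bollob\'as' inequality (Theorem~\ref{th:2.2}) rather than the sum form you wrote, but with $|A_s|=|B_s|=k$ throughout these are equivalent, and your verification of the off-diagonal condition is in fact more detailed than the paper's.
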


The proof of \eqref{eq:2.1} depends on a classical result of Bollob\'as \cite{B}.

\setcounter{theorem}{1}
\begin{theorem}[\cite{B}, cf.\ also \cite{JP} and \cite{Ka1}]
\label{th:2.2}
Suppose that $a, b$ are positive integers, $\mathcal A = \{A_1, \dots, A_m\}$, $\mathcal B = \{B_1, \dots, B_m\}$ are families satisfying $|A_i| = a$, $|B_i| = b$, $A_i \cap B_i = \emptyset$ for $1 \leq i \leq m$ and also
\beq
\label{eq:2.2}
A_i \cap B_j \neq \emptyset \ \ \ \text{ for all } \ \ 1 \leq i \neq j \leq m.
\eeq
Then
\beq
\label{eq:2.3}
m \leq {a + b\choose a}.
\eeq
\end{theorem}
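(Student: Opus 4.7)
The plan is to give the short permutation argument due to Katona. I would fix a ground set $V$ containing all the $A_i$ and $B_i$, choose a uniformly random linear order $\pi$ of $V$, and for each $i \in \{1,\dots,m\}$ define the event $E_i$ that every element of $A_i$ precedes every element of $B_i$ in $\pi$. Since $A_i$ and $B_i$ are disjoint and of total size $a+b$, each of the $\binom{a+b}{a}$ possible patterns for the relative positions of $A_i$ inside $A_i \cup B_i$ is equally likely, so $\Pr(E_i) = \binom{a+b}{a}^{-1}$.

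The heart of the argument, and the one place where the cross-intersection hypothesis \eqref{eq:2.2} enters, is the claim that the events $E_1, \dots, E_m$ are pairwise disjoint. To verify this, I would suppose $E_i$ and $E_j$ both occur for some $i \neq j$, pick $x \in A_i \cap B_j$ and $y \in A_j \cap B_i$ using \eqref{eq:2.2} (these are distinct since $A_i \cap B_i = \emptyset$), and derive a contradiction: $E_i$ together with $x \in A_i$, $y \in B_i$ forces $\pi(x) < \pi(y)$, while $E_j$ together with $y \in A_j$, $x \in B_j$ forces $\pi(y) < \pi(x)$.

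With disjointness in hand, the conclusion is immediate: $\sum_{i=1}^{m} \Pr(E_i) = m/\binom{a+b}{a} \leq 1$, which rearranges to \eqref{eq:2.3}. The only genuinely non-trivial step is the disjointness verification; once one commits to the permutation framework, everything else reduces to the observation that the induced order on each disjoint union $A_i \cup B_i$ is a uniformly random order on $a+b$ elements.
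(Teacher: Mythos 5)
Your argument is correct and complete. Note that the paper does not prove Theorem \ref{th:2.2} at all---it is quoted as a classical result of Bollob\'as, with \cite{JP} and \cite{Ka1} cited for alternative proofs---so there is no in-paper argument to compare against; what you have written is precisely Katona's permutation proof \cite{Ka1}. The two pivotal points are both handled properly: the probability computation $\Pr(E_i)=\binom{a+b}{a}^{-1}$ follows because the induced order on the disjoint union $A_i\cup B_i$ is uniform, and the pairwise disjointness of the $E_i$ uses the hypothesis \eqref{eq:2.2} in both directions (picking $x\in A_i\cap B_j$ and $y\in A_j\cap B_i$, distinct since $A_i\cap B_i=\emptyset$), which is legitimate here since \eqref{eq:2.2} is stated for all ordered pairs $i\neq j$. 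One could remark that this symmetric use of the hypothesis is exactly what confines the permutation method to the symmetric version of the theorem (the skew version needs linear-algebraic tools), but for the statement as given your proof is sound.
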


\begin{proof}[Proof of Proposition \ref{pr:2.1}]
Define $A_i = P_i$ for $1 \leq i \leq \ell$, $A_i = Q_{i - \ell}$ for $\ell + 1 \leq i \leq 2\ell$ and similarly $B_i = Q_i$ for $1 \leq i \leq \ell$, $B_i = P_{i - \ell}$ for $\ell + 1 \leq i \leq 2\ell$.
Then $\mathcal A = \{A_1, \dots, A_{2\ell}\}$ and $\mathcal B = \{B_1, \dots, B_{2\ell}\}$ satisfy the conditions of Theorem \ref{th:2.2} with $a = b = k$.
Thus $2\ell \leq {2k\choose k}$ and thereby \eqref{eq:2.1} follows.
\end{proof}

If $\mathcal F_0 \neq \emptyset$, then one can use an extension (cf.\ \cite{F1}) of \eqref{eq:2.3} to show that \eqref{eq:2.1} is strict.

Another ingredient of the proof of Theorem \ref{th:1.6} is the following

\setcounter{theorem}{2}
\begin{theorem}[\cite{F2}]
\label{th:2.3}
Suppose that $\mathcal A \subset {[n]\choose k}$, $n > 2k \geq 6$.
Let $r$ be an integer, $4 \leq r \leq k + 1$.
If $\mathcal A$ is intersecting and $\Delta(\mathcal A) \le \Delta(\mathcal B_r)$ then
\beq
\label{eq:2.4}
|\mathcal A| \le|\mathcal B_r|.
\eeq
\end{theorem}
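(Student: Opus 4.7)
The plan is to shift $\mathcal A$ within the ground set $[2,n]$ (preserving the degree of vertex~$1$) and then exploit the cross-intersecting structure of $\mathcal A(1)$ and $\mathcal A(\bar 1)$. First, I relabel $[n]$ so that vertex~$1$ realises the maximum degree, giving $|\mathcal A(1)| = \Delta(\mathcal A) \le \Delta(\mathcal B_r)$. Next, I apply all shifts $S_{ij}$ with $2 \le i < j \le n$. Since these never touch vertex~$1$, they preserve $|\mathcal A(1)|$, the intersecting property, and $|\mathcal A|$. After the process, $\mathcal A(1)$ (as a $(k-1)$-uniform family on $[2,n]$) and $\mathcal A(\bar 1)$ (as a $k$-uniform intersecting family on $[2,n]$) are both shifted, and the pair $(\mathcal A(1), \mathcal A(\bar 1))$ is cross-intersecting.

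If $\mathcal A(\bar 1) = \emptyset$, then $|\mathcal A| = |\mathcal A(1)| \le \Delta(\mathcal B_r) < |\mathcal B_r|$. Otherwise let $s \ge 1$ be the largest integer with $[2,s] \subseteq F$ for every $F \in \mathcal A(\bar 1)$. If $s \ge r$, every $F \in \mathcal A(\bar 1)$ contains $[2,r]$, so $|\mathcal A(\bar 1)| \le \binom{n-r}{k-r+1} = |\mathcal B_r(\bar 1)|$; combined with $|\mathcal A(1)| \le \Delta(\mathcal B_r) = |\mathcal B_r(1)|$ this yields $|\mathcal A| \le |\mathcal B_r|$.

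The remaining (and main) case is $s < r$, where $|\mathcal A(\bar 1)|$ can a priori exceed $|\mathcal B_r(\bar 1)|$. Here I contract the forced common elements by setting $\tilde{\mathcal A} := \{F \setminus [2,s] : F \in \mathcal A(\bar 1)\} \subseteq \binom{[s+1,n]}{k-s+1}$, a shifted intersecting family on $[s+1,n]$. The desired inequality $|\mathcal A| \le |\mathcal B_r|$ is equivalent to the cross-intersecting statement
\[
|\mathcal A(1)| + |\tilde{\mathcal A}| \le \Delta(\mathcal B_r) + \binom{n-r}{k-r+1},
\]
so the task is to establish a tight trade-off: any excess of $|\tilde{\mathcal A}|$ beyond $\binom{n-r}{k-r+1}$ must be matched by a matching deficit of $|\mathcal A(1)|$ below $\Delta(\mathcal B_r)$, forced by the cross-intersection between $\tilde{\mathcal A}$ and the portion of $\mathcal A(1)$ supported on $[s+1,n]$.

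The hard part will be proving this cross-intersecting inequality for every $1 \le s < r$. A natural route is induction on $r - s$: the shifted intersecting structure of $\tilde{\mathcal A}$ allows one to recurse via Theorem~\ref{th:2.3} itself in the smaller universe $[s+1,n]$ with uniformity $k - s + 1$ and parameter $r - s$, using the Hilton--Milner theorem (Theorem~\ref{th:1.4}) as the base. The extremisers at each step are stars sharing a common centre, which traces back through the contraction to identify $\mathcal B_r$ (up to isomorphism) as the unique extremum, matching the hypothesis range $n > 2k \ge 6$ and $4 \le r \le k+1$.
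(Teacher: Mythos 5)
This theorem is not proved in the paper at all: it is quoted from Frankl~\cite{F2} (with~\cite{KZ} indicated as an alternative proof), so your attempt has to stand on its own, and as written it does not. The easy parts are fine: shifting inside $[2,n]$ preserves $|\mathcal A(1)|$, $|\mathcal A|$ and the intersecting property, and the cases $\mathcal A(\bar 1)=\emptyset$ and $s\ge r$ (every member of $\mathcal A(\bar 1)$ containing $[2,r]$) do give $|\mathcal A|\le \Delta(\mathcal B_r)+\binom{n-r}{k-r+1}=|\mathcal B_r|$. But the case $s<r$ is precisely the content of Frankl's theorem, and there you only restate the goal (``any excess of $|\tilde{\mathcal A}|$ must be matched by a deficit of $|\mathcal A(1)|$'') and defer it (``the hard part will be proving this''). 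No quantitative cross-intersecting lemma is proved or even formulated that would deliver this trade-off; the corollaries of Kruskal--Katona in the style of Corollaries~\ref{cor:2.5} and~\ref{cor:2.6} give crude bounds, and making the trade-off tight enough to recover exactly $|\mathcal B_r|$ is the actual work in \cite{F2} and \cite{KZ}.

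Moreover, the proposed recursion is flawed in concrete ways. First, $\tilde{\mathcal A}=\{F\setminus[2,s]:F\in\mathcal A(\bar 1)\}$ need not be intersecting when $s\ge 2$: two sets of $\mathcal A(\bar 1)$ may meet only inside the deleted core $[2,s]$, so the ``shifted intersecting family on $[s+1,n]$'' you want to recurse on may simply fail the hypothesis. Second, ``recursing via Theorem~\ref{th:2.3} itself with parameter $r-s$'' is not legitimate: the theorem requires the parameter to be at least $4$, while $r-s$ can be as small as $1$; the degree hypothesis $\Delta(\tilde{\mathcal A})\le\Delta(\mathcal B_{r-s})$ in the smaller universe is never established; and, most importantly, what you need is a joint bound on $|\mathcal A(1)|+|\mathcal A(\bar 1)|$ for a \emph{cross-intersecting pair}, which is not an instance of the theorem being proved (a bound on a single intersecting family with a degree condition). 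The closing claim about extremisers being stars with a common centre is likewise unsupported and not needed for the statement. So the proposal is an outline of a plausible strategy, not a proof; the key inequality remains unproven.
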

See \cite{KZ} for an alternative proof of this theorem. 

Let us note that if $\mathcal A$ is not a star then for all $x \in [n]$ there exists $A(x) \in \mathcal A$ with $x \notin A(x)$.
There are only ${n - 1\choose k - 1} - {n - k - 1\choose k - 1}$ sets $A \in {[n]\choose k}$ satisfying $x \in A$, $A \cap A(x) \neq \emptyset$.
Thus $|\mathcal A(x)| \leq {n - 1\choose k - 1} - {n - k - 1\choose k - 1} = |\mathcal B_{k + 1}(1)|$.
This shows that Theorem \ref{th:2.3} extends the Hilton--Milner Theorem.

The last ingredient of the proof is the Kruskal--Katona Theorem (\cite{Kr}, \cite{Ka2}).
We use it in a form proposed by Hilton \cite{H}.

For fixed $n$ and $k$ let us define the \emph{lexicographic order} $<_L$ on ${[n]\choose k}$ by setting
$$
A <_L B \ \ \ \text{ iff } \ \ \min\{x \in A \setminus B\} < \min\{x \in B \setminus A\}.
$$
For an integer $1 \leq m \leq {n\choose k}$ let $\mathcal L(m) = \mathcal L(m, n, k)$ denote the family of the first $m$ subsets $A \in {[n]\choose k}$ in the lexicographic order.

Let $a, b$ be positive integers, $a + b \leq n$.
Two families $\mathcal A \subset {[n]\choose a}$, $\mathcal B\subset {[n]\choose b}$ are called \emph{cross-intersecting} if $A \cap B \neq \emptyset$ for all $A \in \mathcal A$, $B \in \mathcal B$.

\begin{theorem}[\cite{Kr}, \cite{Ka2}, \cite{H}]
\label{th:2.4}
Let $X \subset [n]$ and $|X| \geq a + b$.
If $\mathcal A \subset {X\choose a}$ and $\mathcal B\subset {X \choose b}$ are cross-intersecting then $\mathcal L(|\mathcal A|, X, a)$ and $\mathcal L(|\mathcal B|, X, b)$ are cross-intersecting as well.
\end{theorem}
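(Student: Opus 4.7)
The plan is to derive Theorem~\ref{th:2.4} from the Kruskal--Katona shadow theorem by translating cross-intersection into a shadow-avoidance statement. For a family $\mathcal G\subseteq\binom{X}{b}$, let $\mathcal G^{c}=\{X\setminus G:G\in\mathcal G\}\subseteq\binom{X}{|X|-b}$ and define the \emph{meet-set}
\[
N_{a}(\mathcal G)=\left\{A\in\binom{X}{a}:A\cap G\neq\emptyset\text{ for all }G\in\mathcal G\right\}=\binom{X}{a}\setminus\partial_{a}(\mathcal G^{c}),
\]
where $\partial_{a}$ denotes the $a$-th lower shadow. Cross-intersection of $(\mathcal A,\mathcal B)$ is exactly the condition $\mathcal A\subseteq N_{a}(\mathcal B)$, so the theorem reduces to showing
\[
\mathcal L(|\mathcal A|,X,a)\subseteq N_{a}\bigl(\mathcal L(|\mathcal B|,X,b)\bigr).
\]

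I would establish two ingredients, both equivalent formulations of Kruskal--Katona. (i) $|N_{a}(\mathcal G)|$ is maximised, over $\mathcal G\subseteq\binom{X}{b}$ of given size, precisely when $\mathcal G=\mathcal L(|\mathcal G|,X,b)$; equivalently, through the bijection $\mathcal G\leftrightarrow\mathcal G^{c}$ (which reverses the lex order between the two uniform levels), the shadow $|\partial_{a}(\mathcal G^{c})|$ is minimised when $\mathcal G^{c}$ is the corresponding lex-final segment. (ii) The extremal meet-set $N_{a}(\mathcal L(m,X,b))$ is itself a lex-initial segment of $\binom{X}{a}$; equivalently, the lower shadow of a lex-final segment in $\binom{X}{|X|-b}$ is again a lex-final segment in $\binom{X}{a}$. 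I would prove both by the $(i,j)$-compression method: for $i<j$ in $X$, the shift $S_{ij}$ weakly decreases $|\partial_{a}(\mathcal G^{c})|$ while preserving $|\mathcal G|$, reducing to a left-compressed $\mathcal G^{c}$; an induction on $|X|$ that peels off the pivot $\min X$ and decomposes each family according to its presence then identifies a left-compressed extremal family with the explicit lex-final description.

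Granted (i) and (ii), the conclusion is immediate: from $\mathcal A\subseteq N_{a}(\mathcal B)$ and (i) we get $|\mathcal A|\le|N_{a}(\mathcal L(|\mathcal B|,X,b))|$, and by (ii) the right-hand side is the size of a lex-initial segment of $\binom{X}{a}$, so $\mathcal L(|\mathcal A|,X,a)\subseteq N_{a}(\mathcal L(|\mathcal B|,X,b))$, which is precisely the required cross-intersection. The main obstacle is the combined proof of (i) and (ii), i.e.\ the combinatorial core of Kruskal--Katona. The delicate part is the book-keeping under complementation — keeping track of how lex order on $\binom{X}{b}$ corresponds to lex-final order on $\binom{X}{|X|-b}$, and how this interacts with the shadow — and arranging the compression/induction argument so that it simultaneously delivers both the extremal cardinality and the structural assertion that shadows preserve the lex-final shape. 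These points are worked out in \cite{Kr,Ka2,H}.
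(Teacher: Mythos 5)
Your proposal is correct and takes essentially the same route as the paper's own sketch: translate cross-intersection into shadow-avoidance via complementation, then invoke the Kruskal--Katona theorem (shadows are minimised by, and of, lex-final/colex-initial segments) to replace each family by a lexicographic segment. The only cosmetic difference is that you complement $\mathcal B$ and work with the meet-set $N_a$, whereas the paper complements $\mathcal A$ and bounds its $b$-shadow, and you state explicitly the structural fact (ii) that the paper uses implicitly.
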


Let us sketch the proof of this for completeness. Take the family $\mathcal A^c:=\{X\in {[n]\choose n-a}: \bar X\not\in \mathcal A\}$. Consider the $b$-shadow $\partial^b(\mathcal A^c)$, consisting of all sets of size $b$ that are contained in some set from $\mathcal A^c$. Then it is easy to see that $\partial^b(\mathcal A^c)$ must be disjoint from $\mathcal B$. Since the shadow of $\mathcal A^c$ is minimized for the last $|\mathcal A^c|$ sets in the lex order (which is up to a reordering of the ground set is the same as the first $|\mathcal A^c|$ sets in the colex order), the ``best'' choice for $\mathcal A$ is the family $\mathcal L(|\mathcal A|, X, a)$. And then we naturally get that $\mathcal B$ can be taken to be $\mathcal L(|\mathcal B|, X, b)$.

Note that if $\mathcal G \subset {[n]\choose k}$ is intersecting then the two families $\mathcal G(1) \subset {[2,n]\choose k - 1}$ and $\mathcal G(\bar 1) \subset {[2,n]\choose k}$ are cross-intersecting.
Usually we apply Theorem \ref{th:2.4} to these families (with $X = [2, n]$).

In our situation with $\mathcal F \subset {[n]\choose k}$ being almost intersecting and $\mathcal F_0 \subset \mathcal F$ defined by Proposition \ref{pr:1.8}, $\mathcal F_0(1)$ and $\mathcal F(\bar 1)$ are cross-intersecting.

Using Theorem \ref{th:2.4} one easily deduces the following.

\setcounter{corollary}{4}
\begin{corollary}
\label{cor:2.5}
Let $r \geq 3$ be an integer.
Suppose that $\mathcal A \subset {[2, n]\choose k - 1}$ and $\mathcal B \subset {[2, n]\choose k}$ are cross-intersecting, $n > 2k$, $k \geq r$.
If
\beq
\label{eq:2.4masodik}
|\mathcal A| \geq {n - 1\choose k - 1} - {n - r\choose k - 1}.
\eeq
Then
\beq
\label{eq:2.5}
|\mathcal B| \leq {n - r\choose k - r + 1}.
\eeq
\end{corollary}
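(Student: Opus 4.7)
The plan is to reduce to the lex-initial case via Theorem~\ref{th:2.4}, then identify the resulting $\mathcal A$ concretely and read off the structural constraint on $\mathcal B$.

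First I would apply Theorem~\ref{th:2.4} with $X = [2,n]$, $a = k-1$, $b = k$; the hypothesis $|X| \ge a + b$ becomes $n - 1 \ge 2k - 1$, which holds since $n > 2k$. Replacing $\mathcal A$ and $\mathcal B$ by $\mathcal L(|\mathcal A|, [2,n], k-1)$ and $\mathcal L(|\mathcal B|, [2,n], k)$ preserves both cross-intersection and the family sizes, so I may assume both families are lex-initial. Next I would describe the lex-initial segment of ${[2,n]\choose k-1}$ of size ${n-1\choose k-1} - {n-r\choose k-1}$. Since $A \subset [2,n]$ satisfies $A \cap [2,r] \neq \emptyset$ iff $\min A \le r$, these are exactly the lex-smallest $(k-1)$-subsets, and a hockey-stick computation gives their count as
$$ \sum_{i=2}^{r} {n-i \choose k-2} = {n-1\choose k-1} - {n-r\choose k-1}. $$
Thus $\mathcal A$ contains every $(k-1)$-subset of $[2,n]$ that meets $[2,r]$.

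The heart of the argument is then to show that cross-intersection with such a rich $\mathcal A$ forces $[2,r] \subseteq B$ for every $B \in \mathcal B$. If some $i \in [2,r] \setminus B$ existed, then $|[2,n] \setminus (B \cup \{i\})| = n - k - 2 \ge k - 2$ (using $n > 2k$), so one could pick any $(k-2)$-subset $A_0$ there and form $A = \{i\} \cup A_0$. This $A$ lies in $\mathcal A$ (meeting $[2,r]$ at $i$) but is disjoint from $B$, contradicting cross-intersection. Hence $[2,r] \subseteq B$ for every $B \in \mathcal B$; the remaining $k - r + 1$ elements of $B$ lie in $[r+1,n]$, giving $|\mathcal B| \le {n-r\choose k-r+1}$.

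I do not expect a genuine obstacle here: the reduction step is exactly what Theorem~\ref{th:2.4} is designed for, the identification of the lex-initial segment is a routine hockey-stick, and the extension argument uses only the slack that $n > 2k$ provides in leaving at least $k-2$ elements of $[2,n]$ outside $B \cup \{i\}$.
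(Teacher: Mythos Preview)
Your proof is correct and follows essentially the same route as the paper: apply Theorem~\ref{th:2.4} to pass to lex-initial families, identify $\mathcal L\bigl({n-1\choose k-1}-{n-r\choose k-1},[2,n],k-1\bigr)$ with the $(k-1)$-sets meeting $[2,r]$, and then use $n>2k$ to see that any $B$ cross-intersecting this family must contain $[2,r]$. The paper states these steps more tersely, but the ideas and their order are the same; your hockey-stick count and explicit extension argument just spell out what the paper leaves implicit.
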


\begin{proof}
Note that $\mathcal L\left({n - 1\choose k - 1} - {n - r\choose k - 1}, [2, n], k - 1\right) = \left\{L \in {[2,n]\choose k - 1} : L \cap [2, r] \neq \emptyset\right\}$.
Since $n > 2k$, $[2, r] \subset B$ must hold for every $B \in {[2,n]\choose k}$ which intersects \emph{each} member of $\mathcal L\left({n - 1\choose k - 1} - {n - r\choose k - 1}, [2, n], k - 1\right)$.
Via Theorem \ref{th:2.4} this implies \eqref{eq:2.5}.
\end{proof}

\begin{corollary}
\label{cor:2.6}
Suppose that $\mathcal A \subset {[2,n]\choose k - 1}$, $\mathcal B \subset {[2, n]\choose k}$ are cross-intersecting, $n > 2k > 2$,
\beq
\label{eq:2.4harmadik}
|\mathcal B| \geq k.
\eeq
Then
\beq
\label{2.5masodik}
|\mathcal A| \leq {n - 1\choose k - 1} - {n - k\choose k - 1}.
\eeq
\end{corollary}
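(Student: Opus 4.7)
The plan is to argue by contradiction using Theorem \ref{th:2.4} (the Kruskal--Katona--Hilton inequality) in exactly the same spirit as the proof of Corollary \ref{cor:2.5}, only with the roles of the ``large'' and ``small'' side interchanged. Suppose for contradiction that $|\mathcal A|\ge \binom{n-1}{k-1}-\binom{n-k}{k-1}+1$. By Theorem~\ref{th:2.4} we may replace $\mathcal A$ and $\mathcal B$ by the lexicographic initial segments $\mathcal A'=\mathcal L(|\mathcal A|,[2,n],k-1)$ and $\mathcal B'=\mathcal L(|\mathcal B|,[2,n],k)$, which are still cross-intersecting.

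First I would identify $\mathcal A'$. The $r=k$ instance of the identity already used in Corollary~\ref{cor:2.5} gives
$$
\mathcal L\!\left(\tbinom{n-1}{k-1}-\tbinom{n-k}{k-1},[2,n],k-1\right)=\left\{L\in\tbinom{[2,n]}{k-1}:L\cap[2,k]\neq\emptyset\right\},
$$
and the next set in the lex order on $\binom{[2,n]}{k-1}$ is the lex-minimal $(k-1)$-subset of $[k+1,n]$, namely $[k+1,2k-1]$. Hence $\mathcal A'$ contains every $(k-1)$-subset of $[2,n]$ meeting $[2,k]$ together with $[k+1,2k-1]$.

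Next I would extract the structure forced on $\mathcal B'$. Each $B\in\mathcal B'$ must meet every member of $\mathcal A'$. Just as in the proof of Corollary~\ref{cor:2.5}, the condition that $B$ meets every $(k-1)$-subset of $[2,n]$ touching $[2,k]$ forces $[2,k]\subset B$; indeed, if some $j\in[2,k]$ were missing from $B$, then since $n>2k$ we could pick $k-2$ further elements from $[2,n]\setminus(B\cup\{j\})$ to form a disjoint $(k-1)$-set in $\mathcal A'$, contradiction. Since $|B|=k$, we conclude $B=[2,k]\cup\{j\}$ for some $j\in[k+1,n]$. The remaining requirement that $B$ meet $[k+1,2k-1]$ then forces $j\in[k+1,2k-1]$, leaving at most $k-1$ choices. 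Thus $|\mathcal B|=|\mathcal B'|\le k-1$, contradicting \eqref{eq:2.4harmadik}.

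The step that requires slight care (rather than being a genuine obstacle) is pinpointing the lex-successor of $\{L:L\cap[2,k]\neq\emptyset\}$ and verifying that $n>2k$ is the precise hypothesis needed both to make $[k+1,2k-1]$ a legal $(k-1)$-subset of $[2,n]$ and to produce the disjoint witness $A$ used to deduce $[2,k]\subset B$. Everything else is bookkeeping.
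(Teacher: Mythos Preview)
Your proof is correct. It uses the same Kruskal--Katona--Hilton template as the paper, but from the opposite side: the paper works directly rather than by contradiction, replacing $\mathcal B$ by $\mathcal L(k,[2,n],k)=\{[2,k]\cup\{j\}:k+1\le j\le 2k\}$ and observing that a $(k-1)$-set meeting all $k$ of these must meet $[2,k]$ (since otherwise it would have to contain each of $k+1,\ldots,2k$, which is one element too many), giving the bound on $|\mathcal A|$ immediately. Your contrapositive route via the $\mathcal A$ side is equally valid and only marginally longer; identifying the lex-successor $[k+1,2k-1]$ is the one extra bookkeeping step you incur.
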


\begin{proof}
Just note that $\mathcal L(k, [2, n], k) = \bigl\{[2,k] \cup \{j\}, k + 1 \leq j \leq 2k\bigr\}$ and the only $(k - 1)$-sets intersecting each of these $k$-sets are those which intersect $[2,k]$.
\end{proof}

\section{Some inequalities concerning binomial coefficients}
\label{sec:3}

In this section we present some inequalities that we use in Section~\ref{sec:5}.
The proofs are via standard manipulations, the reader might just glance through them briefly.

\begin{lemma}
\label{lem:3.1}
\begin{align}
\label{eq:3.1}
{2k\choose k - 2} &\geq {2k - 1\choose k - 1} \ \ \ \text{ for } \ \ k \geq 6,\\
\label{eq:3.2}
{2k + 1\choose k - 2} &\geq {2k - 1\choose k - 1} \ \ \ \text{ for } \ \ k \geq 4.
\end{align}
\end{lemma}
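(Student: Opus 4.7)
The plan is to prove each inequality by writing the ratio of the two binomial coefficients in closed form and reducing to an elementary polynomial inequality in $k$, verified directly at the threshold.

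For \eqref{eq:3.1}, I would compute
$$
\frac{{2k\choose k-2}}{{2k-1\choose k-1}} = \frac{(2k)!\,(k-1)!\,k!}{(k-2)!\,(k+2)!\,(2k-1)!} = \frac{2k(k-1)}{(k+1)(k+2)}.
$$
So \eqref{eq:3.1} is equivalent to $2k(k-1)\geq(k+1)(k+2)$, i.e., $k^2-5k-2\geq 0$. The positive root of this quadratic is $(5+\sqrt{33})/2\approx 5.37$, hence the inequality holds precisely for integer $k\geq 6$ (and one can note it fails at $k=5$, where $25-25-2=-2<0$), matching the claimed range.

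For \eqref{eq:3.2}, the analogous computation gives
$$
\frac{{2k+1\choose k-2}}{{2k-1\choose k-1}} = \frac{(2k+1)\cdot 2k\cdot(k-1)}{(k+1)(k+2)(k+3)},
$$
so after cross-multiplying, the claim reduces to $3k^3 - 8k^2 - 13k - 6 \geq 0$. Evaluated at $k=4$ the left side equals $192-128-52-6=6\geq 0$, and its derivative $9k^2-16k-13$ is already positive at $k=4$ (its positive root is below $3$), so the cubic is monotone increasing on $[4,\infty)$ and hence nonnegative throughout.

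There is essentially no obstacle here: both statements collapse to checking a quadratic or cubic inequality once the ratio of binomials is written out, which is precisely why the authors themselves flag this lemma as routine manipulation.
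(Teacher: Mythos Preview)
Your proof is correct and follows essentially the same approach as the paper: compute the ratio $\binom{2k}{k-2}/\binom{2k-1}{k-1}=2k(k-1)/((k+1)(k+2))$ and check it is at least $1$ from $k=6$ onward. For \eqref{eq:3.2} the paper is marginally slicker, observing $\binom{2k+1}{k-2}>\binom{2k}{k-2}$ so that \eqref{eq:3.1} handles $k\ge 6$ and only $k=4,5$ need direct verification, whereas you carry out a separate cubic computation; either way the argument is routine.
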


\begin{proof}
${2k\choose k - 2} \bigm/ {2k - 1\choose k - 1} = \frac{2k \cdot (k - 1)}{(k + 1)(k + 2)}$ which is a monotone increasing function of~$k$.
Since for $k = 6$, $2\times 6\times 5 = 60 > 56 = 7\times 8$, \eqref{eq:3.1} is proved.
To prove \eqref{eq:3.2} just note ${2k + 1\choose k - 2} > {2k\choose k - 2}$ and check it for $k = 4$ and $5$.
\end{proof}

\begin{lemma}
\label{lem:3.2}
Suppose that $k \geq 10$ and $3k + 2 \geq m \geq 2k - 4$.
Then
\beq
\label{eq:3.3}
2 \geq {m\choose k - 2} \Bigm/{m - 1\choose k - 2} \geq 4/3.
\eeq
Moreover, if $m-s\ge 2k-4$ then
\beq
\label{eq:3.4}
\sum_{0 \leq i \leq s} {m - i\choose k - 2} \geq \left(2 - \frac1{2^s}\right) {m\choose k - 2}.
\eeq
\end{lemma}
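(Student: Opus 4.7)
The plan is to reduce (3.3) to a pair of linear inequalities in $m$ via the standard identity
$$\binom{m}{k-2}\Bigm/\binom{m-1}{k-2}=\frac{m}{m-k+2},$$
and then obtain (3.4) by iterating the upper bound in (3.3) and summing a geometric series.

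For (3.3), I would observe that $\frac{m}{m-k+2}\leq 2$ is equivalent to $m\geq 2k-4$, which is exactly the lower hypothesis on $m$. On the other side, $\frac{m}{m-k+2}\geq \frac{4}{3}$ is equivalent to $m\leq 4k-8$; so it suffices to check $3k+2\leq 4k-8$, i.e.\ $k\geq 10$, which is one of the hypotheses. Both checks are one-line manipulations.

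For (3.4), the hypothesis $m-s\geq 2k-4$ ensures that for every $0\leq i\leq s$ the value $m-i$ still lies in the range of (3.3), so applying the upper bound in (3.3) with $m$ replaced by $m-i+1$ gives
$$\binom{m-i}{k-2}\geq \tfrac{1}{2}\binom{m-i+1}{k-2}$$
for $1\leq i\leq s$. Iterating, $\binom{m-i}{k-2}\geq 2^{-i}\binom{m}{k-2}$, whence
$$\sum_{i=0}^{s}\binom{m-i}{k-2}\geq \binom{m}{k-2}\sum_{i=0}^{s}2^{-i}=\left(2-\tfrac{1}{2^s}\right)\binom{m}{k-2}.$$

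I do not expect a serious obstacle: the only place where any care is needed is in keeping track of the range in which (3.3) is valid during the iteration, which is handled automatically by the assumption $m-s\geq 2k-4$.
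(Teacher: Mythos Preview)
Your proposal is correct and follows essentially the same route as the paper: the paper also reduces (3.3) to the identity $\binom{m}{k-2}/\binom{m-1}{k-2}=m/(m-k+2)$ and the two linear conditions $m\ge 2k-4$ and $m\le 4k-8$ (the latter implied by $k\ge 10$), and then simply declares (3.4) to be ``a direct application of (3.3)'', which is exactly the geometric-series iteration you wrote out.
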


\begin{proof}
${m\choose k - 2} \bigm/{m - 1\choose k - 2} = \frac{m}{m - k + 2}$.
Now \eqref{eq:3.3} is equivalent to
$$
2m - 2k + 4 \geq m \geq \frac43 m - \frac43 k + \frac83.
$$
The first part is equivalent to $m \geq 2k - 4$, the second to $4k - 8 \geq m$.
As for $k \geq 10$, $4k - 8 \geq 3k + 2$, we are done.
The inequality \eqref{eq:3.4} is a direct application of \eqref{eq:3.3}.
\end{proof}

\begin{lemma}
\label{lem:3.3}
Suppose that $n \geq 2(k + \sqrt{k} + 2)$, $k \geq 9$, $r \geq \sqrt{k} + 5$.
Then
\beq
\label{eq:3.5}
{n - r + 1\choose  k - r + 2} < {n - r - 1\choose k - 2}.
\eeq
\end{lemma}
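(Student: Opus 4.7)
The plan is to reduce to the boundary case $r = \sqrt{k}+5$ via monotonicity in $r$, and then prove this base case by direct estimation of the ratio.

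For the monotonicity step, set $g(r) := \binom{n-r-1}{k-2}\big/\binom{n-r+1}{k-r+2}$. Using the elementary identities $\binom{N-1}{K}/\binom{N}{K} = (N-K)/N$ and $\binom{N}{K}/\binom{N-1}{K-1} = N/K$, a short computation yields
$$\frac{g(r+1)}{g(r)} = \frac{(n-r+1)(n-r-k+1)}{(n-r-1)(k-r+2)},$$
which exceeds $1$ because $n-r+1 > n-r-1$ and $n-r-k+1 \ge k-r+2$; the second inequality is equivalent to $n \ge 2k+1$, which holds since $n \ge 2(k+\sqrt{k}+2)$. Thus $g$ is strictly increasing (on the range where both binomials are positive), so it suffices to verify $g(\sqrt{k}+5) > 1$.

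For the base case, writing the binomials as products of consecutive integers and pairing $(n-k-i)$ in the numerator with $(k-i-1)$ in the denominator for $i = 1, \ldots, \sqrt{k}+1$ leaves
$$g(\sqrt{k}+5) = \prod_{i=1}^{\sqrt{k}+1}\frac{n-k-i}{k-i-1}\cdot\frac{(n-k-\sqrt{k}-2)(n-k-\sqrt{k}-3)}{(n-\sqrt{k}-4)(n-\sqrt{k}-5)}.$$
Each paired ratio satisfies $(n-k-i)/(k-i-1) = 1 + (n-2k+1)/(k-i-1) \ge 1 + (2\sqrt{k}+5)/(k-2) \ge 1 + 2/\sqrt{k}$, using $n \ge 2k+2\sqrt{k}+4$ and $k-i-1 \le k-2$. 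The second-order Bernoulli inequality $(1+x)^m \ge 1 + mx + \binom{m}{2}x^2$ with $m = \sqrt{k}+1$ and $x = 2/\sqrt{k}$ then lower-bounds the paired product by $5$. Each of the two leftover factors exceeds $1/2$, since $n-\sqrt{k}-j > 2(k-2)$ for $j \in \{4,5\}$, so the leftover product is $> 1/4$. Multiplying, $g(\sqrt{k}+5) > 5/4 > 1$.

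The main obstacle is the tightness of the base case: asymptotically, the paired product tends to $e^2 \approx 7.4$ while the leftover factor tends to $1/4$, so the true ratio $g(\sqrt{k}+5)$ is bounded (roughly) by something near $2$ from below. The first-order Bernoulli estimate would only give $\ge 3$ for the paired product, which would be insufficient once combined with the $\sim 1/4$ leftover loss; using the second-order version is what makes the argument close.
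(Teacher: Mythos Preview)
Your proof is correct and follows the same strategy as the paper's: show that the ratio $\binom{n-r-1}{k-2}\big/\binom{n-r+1}{k-r+2}$ is increasing in $r$, reduce to the boundary value $r=\sqrt{k}+5$, and then bound that ratio by elementary product estimates. The only difference is in the execution of the final estimate---the paper additionally reduces to the extremal value $n=2k+2\sqrt{k}+4$ via a monotonicity-in-$n$ claim and then uses $1-x<e^{-x}$, whereas you work with general $n$ and use the second-order Bernoulli inequality; both routes are equally elementary and the approaches are essentially the same.
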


\begin{proof}
Let us first show that for $n, k$ fixed the function $f(r) = {n - r + 1\choose k - r + 2}\bigm/{n - r - 1\choose k - 2}$ is monotone decreasing in $r$.
Indeed, $f{(r + 1)}/f(r) = \frac{n - r - 1}{n - r + 1} \cdot \frac{k - r + 2}{n - k - r + 1} < 1$ as both factors are less than $1$ for $n > 2k + 1$.

Consequently it is sufficient to check \eqref{eq:3.5} in the case $r = t + 1$ where $t = \left\lfloor \sqrt{k}\right\rfloor + 4$.
Fixing $k$ and thereby $r, t$, define
$$
g(n) = {n - t\choose k - t + 1} \Bigm/ {n - t - 2\choose k - 2}.
$$

\setcounter{claim}{3}
\begin{claim}
\label{cl:3.4}
For $n\in \mathbb R$ and $n \geq 2k$, $g(n)$ is a monotone decreasing function of $n$.
\end{claim}
\begin{proof} Indeed,
$$
g(n + 1) / g(n) = \frac{n - t + 1}{n - t - 1} \cdot \frac{n - k - t + 1}{n - k} \leq \frac{(n - t + 1)(n - k - 2)}{(n - t - 1)(n - k)} < 1
$$
where we used $t \geq 3$ and $ab > (a - 2)(b + 2)$ for $a > b + 2 > 0$.\end{proof}

In view of the claim it is sufficient to prove \eqref{eq:3.5} for the case $n = 2k + 2\sqrt{k} + 4$.
\beq\label{eq:3.6}
\frac{{n - t\choose k - t + 1} }{{n - t - 2\choose k - 2}} = \\
\frac{(n - t)(n - t - 1)}{(n - k - t+2)(n - k - t + 1)} \cdot \prod_{0 \leq j \leq t - 4} \frac{k - 2 - j}{n - k - 1 - j}.
\eeq

To estimate the RHS, note that the first part is at most $2 \times 2 = 4$.
As to the product part, we can use the inequality $\frac{(a - i)(a + i)}{(b - i)(b + i)} < \left(\frac{a}{b}\right)^2$, valid for all $b > a > i > 0$ to get the upper bound
$$
\left(\frac{k - \frac{t}{2}}{n - k + 1 - \frac{t}{2}}\right)^{t - 3} = \left(1 - \frac{n + 1 - 2k}{n - k + 1 - \frac{t}{2}}\right)^{t - 3}.
$$
To prove \eqref{eq:3.5} we need to show that this quantity is at most $1/4$.
We show the stronger upper bound $e^{-\frac32}$.
Using the inequality $1 - x < e^{-x}$, it is sufficient to show
$$
\frac{n + 1 - 2k}{n + 1 - k - \frac{t}{2}} > \frac3{2(t - 3)}.
$$
Plugging in $n = 2k + 2\sqrt{k} + 4$, $t = \sqrt{k} + 4$ the above inequality is equivalent to
$$
2(\sqrt{k}+1)\bigl(2\sqrt{k} + 5\bigr) > 3k + \frac{9}{2} \sqrt{k} + 9, \ \ \text{ or}
$$
$k + 9.5\sqrt{k} +1> 0$ which is true for $k\ge 0$.
\end{proof}

\setcounter{lemma}{4}
\begin{lemma}
\label{lem:3.5}
Suppose that $n \geq 3k + 3$, $k \geq 4$ then
\beq
\label{eq:3.7}
{n - 4\choose k - 3} + {2k - 1\choose k - 1} \leq {n - 5\choose k - 2} + {n - 5\choose k - 4}.
\eeq
\end{lemma}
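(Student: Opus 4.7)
The plan is to reduce \eqref{eq:3.7} to a single-variable inequality in $k$ at the endpoint $n = 3k + 3$ and then handle that by induction. I would begin by applying Pascal's identity ${n - 4\choose k - 3} = {n - 5\choose k - 3} + {n - 5\choose k - 4}$ on the left-hand side of \eqref{eq:3.7}; cancelling the common term ${n - 5\choose k - 4}$ from both sides reduces \eqref{eq:3.7} to the equivalent form
$$
{2k - 1\choose k - 1} \leq {n - 5\choose k - 2} - {n - 5\choose k - 3}.
$$

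The right-hand side, viewed as a function $\phi(n)$ of $n$, is non-decreasing on our range: two applications of Pascal give
$$
\phi(n + 1) - \phi(n) = {n - 5\choose k - 3} - {n - 5\choose k - 4},
$$
which is non-negative whenever $n - 5 \geq 2(k - 3) - 1$, i.e., $n \geq 2k - 2$, and this is comfortably satisfied for $n \geq 3k + 3$. Hence it suffices to prove the inequality at $n = 3k + 3$, where, using the ratio identity ${3k - 2\choose k - 3}/{3k - 2\choose k - 2} = (k - 2)/(2k + 1)$, it simplifies to
$$
(2k + 1){2k - 1\choose k - 1} \leq (k + 3){3k - 2\choose k - 2}.
$$

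This last inequality I would verify by induction on $k$, starting from the base case $k = 4$ (which is in fact equality: $9 \cdot 35 = 7 \cdot 45 = 315$). Computing the ratio of the right-hand side to the left-hand side at $k + 1$ divided by the corresponding ratio at $k$ gives
$$
\frac{3k(3k + 1)(3k - 1)(k + 4)}{4(k - 1)(k + 3)(2k + 1)(2k + 3)},
$$
which one easily checks to be at least $1$ for $k \geq 4$: cross-multiplying and expanding reduces the claim to the positivity of a quartic in $k$ that is clear for $k \geq 2$, and the ratio tends to $27/16$ as $k \to \infty$. Together with the base case this closes the induction.

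The main technical labor is the polynomial bookkeeping in the inductive step; the rest is routine manipulation via Pascal. A feature worth highlighting is that equality occurs precisely at $(k, n) = (4, 15)$, so the proof cannot afford any slack in the reduction to $n = 3k + 3$ — in particular, the monotonicity of $\phi(n)$ in Step 2 must be used with equality in the extremal direction, and the whole inequality \eqref{eq:3.7} itself is sharp at this corner.
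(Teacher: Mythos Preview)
Your proof is correct. Both your argument and the paper's reduce to the endpoint $n=3k+3$ via a monotonicity step, but the details differ. You first apply Pascal to cancel $\binom{n-5}{k-4}$ from both sides, which cleanly isolates the problem as $\binom{2k-1}{k-1}\le\binom{n-5}{k-2}-\binom{n-5}{k-3}$; the paper instead keeps the original form and shows that the right side outgrows the left via $\binom{n-4}{k-4}<\binom{n-5}{k-3}$. For the base case at $n=3k+3$, the paper checks $k=4,5,6$ by hand and then, for $k\ge 7$, proves the cruder pair of estimates $\binom{3k-1}{k-3}/\binom{3k-2}{k-2}<3/4$ and $\binom{2k-1}{k-1}/\binom{3k-2}{k-2}\le 1/4$. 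Your route---rewriting the endpoint inequality as $(2k+1)\binom{2k-1}{k-1}\le (k+3)\binom{3k-2}{k-2}$ and running a single induction on $k$ via the explicit ratio---is more uniform and avoids the three ad hoc checks, at the cost of verifying positivity of the quartic $11k^4+44k^3-31k^2+60k+36$. Your observation that equality holds exactly at $(k,n)=(4,15)$ is a nice sharpness remark that the paper does not make explicit.
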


\begin{proof}
Let us first prove \eqref{eq:3.7} in the case $n = 3k + 3$,
\beq
\label{eq:3.8}
{3k - 1\choose k - 3} + {2k - 1\choose k - 1} \leq {3k - 2\choose k - 2} + {3k - 2\choose k - 4}.
\eeq

The cases $k = 4, 5, 6$ can be checked directly.
Let $k \geq 7$.
Note that
$$
{3k - 1\choose k - 3}\Bigm/{3k - 2\choose k - 2} = \frac{(3k - 1)(k - 2)}{(2k + 1)(2k + 2)} = \frac{3k^2 - 7k + 2}{4k^2 + 6k + 2} < \frac34.
$$
Thus it is sufficient to show
\beq
\label{eq:3.9}
{2k - 1\choose k - 1}\Bigm/{3k - 2\choose k - 2} \leq \frac14.
\eeq
In view of $k \geq 7$, ${2k - 1\choose k - 1}\bigm/{2k\choose k - 2}$ is less than~$1$.
Thus \eqref{eq:3.9} will follow from
\beq
\label{eq:3.10}
{2k\choose k - 2}\Bigm/{2k + 4\choose k - 2} = \frac{(k + 6)(k + 5)(k + 4)(k + 3)}{(2k + 4)(2k + 3)(2k + 2)(2k + 1)} < \frac14.
\eeq

Since $\frac{k + i + 2}{2k + i} = \frac12 + \frac{\frac{i}{2} + 2}{2k + i}$ is a decreasing function of $k$, it is sufficient to check \eqref{eq:3.10} for $k = 7$.
Plugging in $k = 7$ we obtain $\frac{143}{612} < \frac14$, as desired.

To prove \eqref{eq:3.7} for $n > 3k + 3$, we show that passing from $n$ to $n + 1$ the RHS increases more than the LHS.
More exactly we show:
\beq
\label{eq:3.11}
{n - 4\choose k - 4} < {n - 5\choose k - 3}.
\eeq
We have
$$
{n - 4\choose k - 4} \Bigm/ {n - 5\choose k - 3} = \frac{(n - 4)(k - 3)}{(n - k)(n - k - 1)}.
$$
Using $n > 3k$, $\frac{n - 4}{n - k} < 2$ and $\frac{k - 3}{n - k - 1} < \frac12$, we get \eqref{eq:3.11}.
\end{proof}

\section{The case $k = 3$, $n \geq 13$}
\label{sec:4}
Let $\mathcal F = \mathcal F_0 \cup \mathcal P_1 \cup\ldots \cup \mathcal P_\ell$ be the canonical partition of the almost intersecting family $\mathcal F \subset {[n] \choose 3}$.
Let us make the indirect assumption that
\beq
\label{eq:4.1}
|\mathcal F| \geq |\mathcal B^+| = {n-1\choose 2}-{n-4\choose 2}+2= 3n - 7
\eeq
and that $\mathcal F$ is not isomorphic to $\mathcal B^+$. 
In view of \eqref{eq:2.1} and $2{5\choose 2} = 20<3n-6$ one has $\mathcal F_0 \neq \emptyset$. The proof at the end of  Section~\ref{sec:1} implies $\ell(\mathcal F) \geq 2$.

For notational convenience we set $(a, b, c) = \{a, b, c\}$.
By symmetry we assume $\mathcal P_1 = \{(1,2,3), (4,5,6)\}$.
Note that for $F \in (\mathcal F\setminus \mathcal P_1)$, $F \cap (1,2,3) \neq \emptyset$ and $F\cap (4,5,6) \neq \emptyset$ imply
\beq
\label{eq:4.2}
|F\setminus [6]| \leq 1
\eeq
and
\beq
\label{eq:4.3}
\{a, b\} \subset F \ \ \text{ for at least one of the $9$ choices $1 \leq a \leq 3$, $4\leq b \leq 6$.}
\eeq
For $\{a, b\}$, $1 \leq a \leq 3$, $4 \leq b \leq 6$ define $D(a, b) = \{c \in [7, n], (a, b, c) \in \mathcal F\}$.
Let $(a_1, a_2, a_3)$ and $(b_1, b_2, b_3)$ be some permutations of $(1, 2, 3)$ and $(4, 5, 6)$, respectively.

\begin{lemma}
\label{lem:4.1}

\begin{itemize}
\item[{\rm (i)}] If $D(a_i, b_i) \neq \emptyset$ for $i = 1,2,3$ then $D(a_i, b_i)$ is the same $1$-element set for $1 \leq i \leq 3$.

\item[{\rm (ii)}] If $|D(a_1, b_1)| \geq 3$ then $D(a_i, b_i) = \emptyset$ for $i = 2,3$.
%\item[{\rm (iii)}] If $|D(a_1, b_1)| = |D(a_2, b_2)| = 2$ then $D(a_1, b_1) = D(a_2, b_2)$.
\end{itemize}
\end{lemma}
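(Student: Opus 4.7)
The plan rests on a single structural observation: when $(a_1,a_2,a_3)$ is a permutation of $(1,2,3)$ and $(b_1,b_2,b_3)$ of $(4,5,6)$, the pairs $\{a_i,b_i\}$ form a perfect matching, so they are pairwise disjoint subsets of $[6]$. Combined with $D(a_i,b_i)\subset[7,n]$, this means that for any $c_i\in D(a_i,b_i)$ and $c_j\in D(a_j,b_j)$ with $i\ne j$, the sets $(a_i,b_i,c_i)$ and $(a_j,b_j,c_j)$ intersect if and only if $c_i=c_j$. Furthermore, by Definition~\ref{def:1.2} no element of $\mathcal F$ may be disjoint from two distinct other members of $\mathcal F$, and every $F\in\mathcal F\setminus\mathcal P_1$ meets both $(1,2,3)$ and $(4,5,6)$ (its unique potential disjoint partner, if any, lies in its own $\mathcal P_i$, so $\mathcal P_1$ is already ruled out).

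For part (i), I would assume each $D(a_i,b_i)$ is nonempty, pick $c_i\in D(a_i,b_i)$, and set $X_i=(a_i,b_i,c_i)$. If the three $c_i$ are not all equal, a quick case split (all distinct, or exactly two equal) shows that some $X_j$ is disjoint from the other two $X$'s, contradicting the almost intersecting property. Hence $c_1=c_2=c_3=:c$. To upgrade this to $|D(a_i,b_i)|=1$ for each $i$, suppose some $c'\ne c$ belongs to, say, $D(a_1,b_1)$. Then the set $(a_1,b_1,c')$ is disjoint from both $X_2=(a_2,b_2,c)$ and $X_3=(a_3,b_3,c)$, again a contradiction. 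So $D(a_i,b_i)=\{c\}$ for every $i$.

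For part (ii), assume $|D(a_1,b_1)|\ge 3$ and pick three distinct elements $c,c',c''\in D(a_1,b_1)$. Suppose for contradiction that some $d\in D(a_2,b_2)$ exists; since $d$ can equal at most one of $c,c',c''$, the set $(a_2,b_2,d)$ is disjoint from at least two of $(a_1,b_1,c),(a_1,b_1,c'),(a_1,b_1,c'')$, violating almost intersection. Hence $D(a_2,b_2)=\emptyset$, and the identical argument gives $D(a_3,b_3)=\emptyset$. There is no real obstacle here: the whole lemma is a direct consequence of the observation that on this matching structure, intersection is controlled entirely by the last coordinate, so almost intersection forces a sharp rigidity on the $D(a_i,b_i)$.
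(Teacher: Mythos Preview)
Your proof is correct and follows essentially the same approach as the paper: both hinge on the observation that, since the pairs $\{a_i,b_i\}$ form a perfect matching on $[6]$ and the $c_i$ lie in $[7,n]$, two triples $(a_i,b_i,c_i)$ and $(a_j,b_j,c_j)$ with $i\neq j$ intersect if and only if $c_i=c_j$, after which the almost intersecting property forces the stated rigidity. Your organization of part~(i)---first showing any choice of representatives $c_1,c_2,c_3$ must coincide, then upgrading to $|D(a_i,b_i)|=1$---is if anything slightly tidier than the paper's version, which starts from the assumption that some $|D(a_i,b_i)|\ge 2$.
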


\begin{proof}
Suppose by symmetry $|D(a_1, b_1)| \geq 2$ and let $x, y \in D(a_1, b_1)$.
The almost intersecting property implies $(a_i, b_i, z) \notin \mathcal F$ for $i = 2,3$ and $z \notin \{x, y\}$.
This already proves (ii).
To continue with the proof of (i) choose $x_2, x_3 \in \{x, y\}$, not necessarily distinct elements so that $(a_i, b_i, x_i) \in \mathcal F$ for $i = 2,3$.

There are two simple cases to consider.
Either $x_2 = x_3$ or $x_2 \neq x_3$.
By symmetry assume $x_3 = y$.
In the first case $(a_1, b_1, x)$ is disjoint to both $(a_2, b_2, y)$ and $(a_3, b_3, y)$.
While in the latter case $(a_3, b_3, y)$ is disjoint to both $(a_1, b_1, x)$ and $(a_2, b_2, x)$.
These contradict the almost intersecting property.
%Now (iii) follows in the same way.
\end{proof}

\begin{lemma}
\label{lem:4.2}
If $|D(a,b)| \geq 3$ for some $1 \leq a \leq 3$, $4 \leq b \leq 6$, then $\{a, b\} \cap F \neq \emptyset$ for all $F \in \mathcal F$.
\end{lemma}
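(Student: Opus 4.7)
My plan is to argue by contradiction and produce three pairwise disjoint partners for some $F \in \mathcal F$, violating the almost intersecting property. By symmetry I may assume $(a,b) = (1,4)$, so $|D(1,4)| \geq 3$ provides three distinct elements $x_1, x_2, x_3 \in [7,n]$ with $G_i := (1,4,x_i) \in \mathcal F$. I suppose some $F \in \mathcal F$ satisfies $\{1,4\} \cap F = \emptyset$ and aim to show that $F \subset \{2,3,5,6\}$; once that is achieved, each $G_i$ is automatically disjoint from $F$, giving three distinct disjoint partners, in contradiction with Definition~\ref{def:1.2}.

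The first step is to apply Lemma~\ref{lem:4.1}(ii) with $a_1 = 1$, $b_1 = 4$. Letting $(a_2,a_3)$ range over the two orderings of $\{2,3\}$ and $(b_2,b_3)$ over the two orderings of $\{5,6\}$, the four resulting pairs $(a_2,b_2)$ cover all of $\{2,3\}\times\{5,6\}$, so I obtain $D(a',b') = \emptyset$ for every $a' \in \{2,3\}$ and $b' \in \{5,6\}$.

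Next I would locate $F$. Since $1 \in (1,2,3)$ and $4 \in (4,5,6)$, the assumption $\{1,4\}\cap F = \emptyset$ gives $F \notin \mathcal P_1$, so \eqref{eq:4.3} applies and yields $\{a',b'\} \subset F$ for some $a' \in \{1,2,3\}$ and $b' \in \{4,5,6\}$; the assumption $1,4 \notin F$ forces $a' \in \{2,3\}$ and $b' \in \{5,6\}$. Writing $F = \{a',b',c\}$, the case $c \in [7,n]$ would put $c$ into the empty set $D(a',b')$, so $c \in [6]\setminus\{1,4,a',b'\}$, whence $F \subset \{2,3,5,6\}$. Then $F$ is disjoint from $\{1,4,x_i\}$ for $i=1,2,3$, closing the argument.

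The only point requiring some care is the bookkeeping in the application of Lemma~\ref{lem:4.1}(ii): one must read its permutation quantifier carefully to confirm that all four ``completely disjoint'' pairs $(a',b') \in \{2,3\}\times\{5,6\}$ are indeed forced to have empty $D$. Everything else is immediate from \eqref{eq:4.3} together with the trivial observation that a $3$-set $F$ disjoint from $\{1,4\}$ and containing a pair from $\{2,3\}\times\{5,6\}$ has its third element either in $[7,n]$ (ruled out by the first step) or among the remaining two points of $\{2,3,5,6\}$.
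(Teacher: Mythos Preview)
Your argument is correct, but it takes a noticeably longer route than the paper's. The paper simply observes that any $F\in\mathcal F$ with $\{1,4\}\cap F=\emptyset$ lies in $\mathcal F\setminus\mathcal P_1$, so \eqref{eq:4.2} gives $|F\setminus[6]|\le 1$; hence $F$ meets at most one of the three elements $x_1,x_2,x_3\in D(1,4)\subset[7,n]$ and is therefore disjoint from at least two of the triples $(1,4,x_i)$, already contradicting almost intersection. No appeal to Lemma~\ref{lem:4.1}(ii) or to \eqref{eq:4.3} is needed.

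Your approach instead first invokes Lemma~\ref{lem:4.1}(ii) to clear out all $D(a',b')$ with $(a',b')\in\{2,3\}\times\{5,6\}$, and then uses \eqref{eq:4.3} to force $F\subset\{2,3,5,6\}$, yielding three disjoint partners. This is valid and in fact proves a bit more (the precise location of $F$), but that extra information is not used here. The paper's proof is the more economical one; yours is a nice illustration of how Lemma~\ref{lem:4.1}(ii) constrains the picture, though for this particular lemma it is overkill.
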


\begin{proof}
Suppose by symmetry $(a, b) = (1, 4)$ and $(1, 4, c) \in \mathcal F$ for $c = 7, 8, 9$.
Let indirectly $F \in \mathcal F$ satisfy $F \cap \{1,4\} = \emptyset$.
By \eqref{eq:4.2}, $|F \cap (7, 8, 9)| \leq 1$.
Thus $F$ is disjoint to at least two of the three triples $(1, 4, c)$, $7 \leq c \leq 9$, the desired contradiction.
\end{proof}

How many choices of $(a,b)$, $1 \leq a \leq 3$, $4 \leq b \leq 6$ can be that satisfy $|D(a, b)| \geq 3\,$?
In view of Lemma \ref{lem:4.1} (ii), $\{a, b\} \cap \{a', b'\} \neq \emptyset$ must hold for distinct choices.
Recall the easy fact that every bipartite graph without two disjoint edges is a star. Apply this on the bipartite graph with two classes $\{1,2,3\}$ and $\{4,5,6\}$ and edges corresponding to pairs $(a,b)$ with $|D(a,b)|\ge 3$ and get that all of these edges share a common vertex.
Consequently, by symmetry, we may assume that $|D(a,b)| \geq 3$ implies $a = 1$.
Let us distinguish \emph{four} cases.
$$
|D(1, j)| \geq 3 \ \ \ \text{ for } \ \ \ j = 4, 5, 6.
\leqno{\rm (a)}
$$
We claim that $\mathcal F(\bar 1) = \{(4, 5, 6)\}$.
Let us prove it.
Suppose that $F \in \mathcal F$, $1 \notin F$ and by symmetry $4 \notin F$.
Choose $(x, y, z) \subset [7, n]$ such that $(1, 4, x), (1, 4, y), (1, 4, z) \in \mathcal F$.
In view of \eqref{eq:4.2} at least two of them are disjoint to $F$, a contradiction.

Since $(1,2,3)$ is the only member of $\mathcal F$ disjoint to $(4,5,6)$, now $\mathcal F \subset \bigl\{(1, u, v): \{u, v\} \cap (4, 5, 6) \neq \emptyset\bigr\} \cup \bigl\{(1, 2, 3), (4, 5, 6)\bigr\}$ follows.
$$
|D(1, j)| \geq 3 \ \ \ \ \text{ for } \ \ \ j = 4,5, \ \ \ \text{ but } \ \ \ |D(1,6)| \leq 2.
\leqno{\rm (b)}
$$

In view of Lemma \ref{lem:4.1} (ii), $D(a,b) = \emptyset$ for $a = 2,3$ and $b=4,5,6$.
Using \eqref{eq:4.2} as well we infer
\beq
\label{eq:4.4}
\left| \mathcal F \setminus {[6]\choose 3}\right| \leq  2(n - 6) + |D(1, 6)|.
\eeq

To estimate $\left|\mathcal F \cap {[6]\choose 3}\right|$ we need another simple lemma.

\begin{lemma}
\label{lem:4.3}
If $|D(a,b)| \geq 2$ for some $1 \leq a \leq 3$, $4 \leq b \leq 6$ then $[6] \setminus \{a, b\}$ contains no member of $\mathcal F$.
\end{lemma}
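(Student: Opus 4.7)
The plan is to use the almost intersecting property directly against the two distinct ``witnesses'' provided by $|D(a,b)| \geq 2$. By symmetry we may assume $(a,b) = (1,4)$, so there are distinct $x, y \in [7,n]$ with $(1,4,x), (1,4,y) \in \mathcal F$. Suppose for contradiction that $F \in \mathcal F$ satisfies $F \subset [6] \setminus \{1,4\} = \{2,3,5,6\}$.

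Then $F$ is disjoint from both triples $(1,4,x)$ and $(1,4,y)$: indeed $F$ avoids $\{1,4\}$ by assumption, and $F \subset [6]$ while $x, y \in [7,n]$, so $F$ also avoids $\{x\}$ and $\{y\}$. Since $(1,4,x) \neq (1,4,y)$ (as $x \neq y$), we have exhibited two distinct members of $\mathcal F$ disjoint from $F$, contradicting the almost intersecting property from Definition \ref{def:1.2}.

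There is really no obstacle here; the statement is a one-line consequence of the definition applied to the pair of witnesses guaranteed by $|D(a,b)| \geq 2$. The only thing to double-check is that the two witnesses $(1,4,x)$ and $(1,4,y)$ are genuinely distinct members of $\mathcal F$ (yes, since $D(1,4) \subset [7,n]$ has size at least two) and that both are actually disjoint from the hypothetical $F$, which is immediate from $F \cap \{1,4\} = \emptyset$ and $F \cap [7,n] = \emptyset$.
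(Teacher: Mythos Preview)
Your proof is correct and is essentially identical to the paper's own argument: any $E \subset [6]\setminus\{a,b\}$ is disjoint from $(a,b,c)$ for every $c \in D(a,b)$, and $|D(a,b)|\ge 2$ then violates the almost intersecting property. The only cosmetic difference is that you normalize to $(a,b)=(1,4)$, while the paper works with general $(a,b)$.
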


\begin{proof}
If $E \in {[6]\setminus \{a, b\}\choose 3}$, then $E \cap (a, b, c) = \emptyset$ for all $c \in D(a, b)$.
Thus almost intersection implies $E \notin \mathcal F$.
\end{proof}

Applying the lemma to both $(a, b) = (1, 4)$ and $(1, 5)$ yields $\left| \mathcal F \cap {[6]\choose 3}\right| \leq 20 - 7 = 13$.

In case $|D(1, 6)| = 2$, we have
$$|\mathcal F|\le 2(n-6) +2+13 = 2n+3<3n-7 \ \ \ \ \text{ for }\ \ \ n \geq 13.$$
%$(3, 4, 5) \notin \mathcal F$ follows as well.
%Thus \eqref{eq:4.4} yields
%$$
%|\mathcal F| \leq 2n - 12 + 14 = 2n + 2 < 3n - 7 \ \ \ \ \text{ for } \ \ \ n \geq 10.
%$$

%For $n = 9$ as well we obtain the inequality $|\mathcal F| \leq 3n - 7$.
%However, to obtain uniqueness would require some extra case analysis.
$$
|D(1, 4)| \geq 3 > |D(a, b)| \ \ \ \ \text{ for } \ \ \ (a, b) \neq (1,4), \ \ \ 1 \leq a \leq 3, \ \ 4 \leq b \leq 6.
\leqno{\rm (c)}
$$
In view of Lemma \ref{lem:4.1} (ii), $D(a, b) = \emptyset$ is guaranteed if $(a, b) \cap (1,4) = \emptyset$.
This leads to
\beq
\label{eq:4.5}
\left|\mathcal F \setminus {[6]\choose 3}\right| \leq n - 6 + 4 \times 2 = n + 2.
\eeq
On the other hand Lemma \ref{lem:4.3} yields
$$
\left|\mathcal F \cap {[6]\choose 3}\right| \leq 20 - 4 = 16.
$$
Together with \eqref{eq:4.5} this implies
$$
|\mathcal F| \leq n + 18 < 3n - 7 \ \ \ \ \text{ for }\ \ \ n \geq 13.
$$
$$
|D(a, b)| \leq 2 \ \ \ \ \text{ for all } \ \ \ (a, b), \ \ 1 \leq a \leq 3, \ \ 4 \leq b \leq 6.
\leqno{\rm (d)}
$$

Applying Lemma \ref{lem:4.1} (i) and (ii) gives that
$$
\bigl|D(a_1, b_1)\bigr| + \bigl|D(a_2, b_2)\bigr| + \bigl|D(a_3, b_3)\bigr| \leq 4.
$$
Using this for three disjoint matchings from the complete bipartite graph between {1,2,3} and {4,5,6} yields
$$
\left|\mathcal F \setminus {[6]\choose 3}\right| \leq 12.
$$
Thus
$$
|\mathcal F| \leq 32 \leq 3n - 7 \ \ \ \ \text{ for } \ \ \ n \geq 13.
$$
In case of equality, ${[6]\choose 3} \subset \mathcal F$.
However, that would immediately imply $\mathcal F = {[6]\choose 3}$.
Thus the proof of the case $k = 3$, $n \geq 13$ is complete.
%\end{proof}

\section{The proof of \eqref{eq:1.3} for $k \geq 4$}
\label{sec:5}

We are going to distinguish three cases according to $\Delta(\mathcal F_0)$.
$$
\Delta(\mathcal F_0) \leq {n - 2\choose k - 2} + {n - 3\choose k - 2} = {n - 1\choose k - 1} - {n - 3\choose k - 1}.
\leqno{\rm (a)}
$$
Let us suppose $n \geq 2k + 5$.
In view of \eqref{eq:3.2},
$$
{n - 4\choose k - 2} > {2k - 1\choose k - 1}.
$$
Consequently, for any choice of a full tail $\mathcal T$,
$$
\Delta(\mathcal F_0 \cup \mathcal T) \leq \! \Delta(\mathcal F_0) + \ell \leq {n - 2\choose k - 2} + {n - 3\choose k - 2} + {n - 4\choose k - 2}\! =\! {n - 1\choose k - 1} - {n - 4\choose k - 1}.
$$
Thus we may apply \eqref{eq:2.4} with $r = 4$:
\beq
\label{eq:5.1}
|\mathcal F_0 \cup \mathcal T| \leq {n - 1\choose k - 1} - {n - 4\choose k - 1} + {n - 4\choose k - 3}.
\eeq
From \eqref{eq:5.1} and $\ell \leq {2k - 1\choose k - 1}$ we infer
\beq\label{eq:5.111}
|\mathcal F| \leq {n - 1\choose k - 1} - {n - 4\choose k - 1} + {n - 4\choose k - 3} + {2k - 1\choose k - 1}.
\eeq
Using $\bigl|\mathcal B^+\bigr| > \bigl|\mathcal B_{k + 1} \bigr| \geq \bigl|\mathcal B_5\bigr|$, it is sufficient to show that the RHS is not larger than $|B_5|$.
Equivalently
\beq
\label{eq:5.2}
{n - 4\choose k - 3} + {2k - 1\choose k - 1} \leq {n - 5\choose k - 2} + {n - 5\choose k - 4}.
\eeq
Since \eqref{eq:5.2} is the same as \eqref{eq:3.7}, for $n \geq 3k + 3$ we are done.

To deal with the case (iii), we cannot be so generous. We assume that $n\le 3k+2$.
 Note that $$|\mathcal B^+|>{n-1\choose k-1}-{n-k-1\choose k-1}\ge {n-1\choose k-1}-{2k+1\choose k-1}.$$
Using \eqref{eq:5.111} and the inequality above, it is sufficient for us to show that
$${n-4\choose k-1}-{n-4\choose k-3}\ge 2{2k+1\choose k-1}.$$
The left hand side is $\big(1-\frac{(k-1)(k-2)}{(n-k-1)(n-k-2)}\big){n-4\choose k-1}\ge \big(1-\frac{k^2}{(n-k)^2}\big){n-4\choose k-1}
\ge \Big(1-\big(\frac {k}{k+2\sqrt k+4}\big)^2\Big)
{n-4\choose k-1} \ge \Big(1-\big(1-\frac {2}{\sqrt k} +\frac 1k\big)^2\Big) {n-4\choose k-1} \ge 2k^{-1/2}{n-4\choose k-1}.$
Thus, it is sufficient for us to show that
$${n-4\choose k-1}/{2k+1\choose k-1}\ge k^{1/2}.$$
Let us define $2p = n - 2k - 4$ and note $p > \sqrt{k}$. %We have % and thus $p\ge 4$. 
In view of \eqref{eq:3.3} and $n\le 3k+2$ we have
\beq
\label{eq:5.5}
{n - 4\choose k - 1} \Bigm/ {2k + 1\choose k - 1} > (4/3)^{2p -1}>p>\sqrt k,
\eeq
since $(4/3)^{2x-1}>x$ holds for all $x>0$. %Thus, putting $t:=\sqrt k$, we are done if $t\le (4/3)^{2t-1}$ for any $t\ge 4$. %The derivative of the right hand side is bigger than $1$ for any $t\ge 4$, and thus it is sufficient to verify the inequality  for $t=4$. The latter is verified via an easy calculation. 
This concludes the proof of \eqref{eq:1.3} in this case.

%Instead of \eqref{eq:5.2} we obtain
%\begin{align}
%\label{eq:5.4}
%{n - 5\choose k - 2} \Bigm/ {n - 4\choose k - 3} &= \frac{k + 2p + 2}{2k + 2p} \cdot \frac{k + 2p + 3}{k - 2} =\\
% &=  \frac12 \cdot  \left(1 + \frac{p + 2}{k + p}\right)\!\left(1 + \frac{2p + 5}{k - 2}\right) >
% \frac12 \left(1 + \frac{3\sqrt{k}}{k}\right).
%\nonumber
%\end{align}
%In view of \eqref{eq:3.1} and \eqref{eq:3.3} we have also
%\beq
%\label{eq:5.5}
%{n - 5\choose k - 2} \Bigm/ {2k - 1\choose k - 1} > (4/3)^{2p + 4}.
%\eeq

%Finally by \eqref{eq:3.4} we have
%\beq
%\label{eq:5.6}
%{n - 5\choose k - 2} + \ldots + {n - k - 1\choose k - 2} > \left(2 - \frac1{2^{k - 4}}\right) {n - 5\choose k - 2}.
%\eeq
%Putting \eqref{eq:5.4}, \eqref{eq:5.5} and \eqref{eq:5.6} together yields \eqref{eq:1.3}.
$$
{n - 1\choose k - 1} - {n - 3\choose k - 1} < \Delta(\mathcal F_0) \leq {n - 1\choose k - 1} - {n - k\choose k - 1}.
\leqno{\rm (b)}
$$

Let $1$ be the vertex of highest degree in $\mathcal F_0$.

\begin{claim}
\label{cl:5.1}
Let $\mathcal G \subset {[n]\choose k}$ be any intersecting family containing $\mathcal F_0$.
Then $1$ is the unique vertex of highest degree in~$\mathcal G$.
\end{claim}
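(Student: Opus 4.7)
My plan is to argue by contradiction and reduce to a cross-intersecting Kruskal--Katona obstruction. Assume some $y \neq 1$ satisfies $|\mathcal G(y)| \geq |\mathcal G(1)|$; by symmetry take $y = 2$. Writing $|\mathcal G(1)| = |\mathcal G(1,2)| + |\mathcal G(1,\bar 2)|$ and the analogous identity for $|\mathcal G(2)|$, the assumption becomes $|\mathcal G(\bar 1, 2)| \geq |\mathcal G(1, \bar 2)|$. I then convert the case (b) hypothesis $\Delta(\mathcal F_0) > {n - 1 \choose k - 1} - {n - 3 \choose k - 1}$ into a strong lower bound: since $\mathcal F_0 \subset \mathcal G$ and $|\mathcal F_0(1,2)| \leq {n - 2 \choose k - 2}$, Pascal's identity gives
\[
|\mathcal G(1, \bar 2)| \,\geq\, |\mathcal F_0(1)| - {n - 2 \choose k - 2} \,>\, {n - 1 \choose k - 1} - {n - 3 \choose k - 1} - {n - 2 \choose k - 2} \,=\, {n - 3 \choose k - 2},
\]
and by the reduction above the same strict lower bound holds for $|\mathcal G(\bar 1, 2)|$.

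The key observation is that $\mathcal G(1, \bar 2)$ and $\mathcal G(\bar 1, 2)$ form a cross-intersecting pair of $(k-1)$-uniform families on the ground set $[3, n]$: a common point of $F \in \mathcal G(1,\bar 2)$ and $F' \in \mathcal G(\bar 1, 2)$ cannot be $1$ or $2$, so the surviving $(k-1)$-parts must meet. I will then invoke Theorem \ref{th:2.4} with $X = [3,n]$ to pass to the lex-initial segments of the same sizes. Since the first ${n - 3 \choose k - 2}$ members of ${[3,n] \choose k - 1}$ in lex order are exactly the $(k-1)$-sets containing $3$, and the very next one is $\{4, 5, \ldots, k+2\}$, each initial segment contains both $\{4, 5, \ldots, k+2\}$ and every $(k-1)$-subset of $[3,n]$ through $3$.

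To close the argument I exhibit an explicit obstruction: the $(k-1)$-set $\{3\} \cup [k+3, 2k]$ is a $(k-1)$-subset of $[3,n]$ through $3$ (available because $n > 2k$, ensured by both (ii) and (iii)), so it lies in both initial segments; yet it is disjoint from $\{4, 5, \ldots, k+2\}$, contradicting the cross-intersection guaranteed by Theorem \ref{th:2.4}. I do not anticipate a serious obstacle here; the only subtleties are calibrating the lex threshold at exactly ${n - 3 \choose k - 2} + 1$ and confirming that the hypothesis $n > 2k$ is enough for the disjoint witness to exist, both of which are routine bookkeeping.
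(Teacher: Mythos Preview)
Your argument is correct. Both proofs rest on the Kruskal--Katona/Hilton lemma (Theorem~\ref{th:2.4}), but the cross-intersecting pair you choose differs from the paper's. The paper works with the pair $\bigl(\mathcal G(1),\,\mathcal G(\bar 1)\bigr)$ of uniformities $(k-1,k)$ on $[2,n]$: the case~(b) lower bound $|\mathcal G(1)|\geq |\mathcal F_0(1)|>{n-2\choose k-2}+{n-3\choose k-2}$ feeds directly into Corollary~\ref{cor:2.5} with $r=3$ to give $|\mathcal G(\bar 1)|\leq {n-3\choose k-2}$, and then for any $x\neq 1$ the decomposition $|\mathcal G(x)|=|\mathcal G(1,x)|+|\mathcal G(\bar 1,x)|\leq {n-2\choose k-2}+{n-3\choose k-2}<|\mathcal G(1)|$ finishes at once. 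You instead fix a putative rival vertex $2$, pass to the symmetric pair $\bigl(\mathcal G(1,\bar 2),\,\mathcal G(\bar 1,2)\bigr)$ of equal uniformity $k-1$ on $[3,n]$, push both sizes strictly past ${n-3\choose k-2}$, and exhibit an explicit disjoint pair in the lex initial segments. The paper's route is a little shorter because Corollary~\ref{cor:2.5} is already on the shelf and no witness needs to be produced; your route is more self-contained and has the minor aesthetic advantage of staying within a single uniformity. Neither approach requires anything beyond $n>2k$.
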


\begin{proof}
By assumption $|\mathcal G(1)| \geq |\mathcal F_0(1)| > {n - 2\choose k - 2} + {n - 3\choose k - 2}$.

Let $2 \leq x \leq n$ be an arbitrary vertex.
In view of Corollary \ref{cor:2.5},
$$
\bigl|\mathcal G(\bar 1, x)\bigr| \leq \bigl|\mathcal G(\bar 1)\bigr| \leq {n - 3\choose k - 2}.
$$
The inequality
$$
|\mathcal G(1, x)| \leq {n - 2\choose k - 2}
$$
is obvious.
Therefore $|\mathcal G(x)| = |\mathcal G(\bar 1, x)| + |\mathcal G(1, x)| \leq {n - 2\choose k - 2} + {n - 3\choose k - 2} < |\mathcal G(1)|$.
\end{proof}

Define the parameter $r$, $4 \leq r \leq k$ by
\beq
\label{eq:5.7}
{n - 1\choose k - 1} - {n - (r - 1)\choose k - 1} < \Delta(\mathcal F_0) \leq {n - 1\choose k - 1} - {n - r\choose k - 1}.
\eeq
Let us choose the full tail $\mathcal T$ so that $1 \notin T$ for all $T \in \mathcal T$.
Applying Claim \ref{cl:5.1} to $\mathcal G = \mathcal F_0 \cup \mathcal T$ yields $\Delta(\mathcal F_0 \cup \mathcal T) = \Delta(\mathcal F_0)$.
Thus Theorem \ref{th:2.3} implies
\beq
\label{eq:5.8}
\bigl|\mathcal F_0 \cup \mathcal T\bigr| \leq {n - 1\choose k - 1} - {n - r\choose k - 1} + {n - r\choose k - r + 1}.
\eeq

Let us first prove \eqref{eq:1.3} in the case $n \geq 3k + 3$.
Using $|\mathcal B_r| \leq |\mathcal B_k|$ and $\ell(\mathcal F) \leq {2k - 1\choose k - 1}$ it is sufficient to show
${n - 1\choose k - 1} - {n - k\choose k - 1} + {n - k\choose 1} + {2k - 1\choose k - 1} < {n - 1\choose k - 1} - {n - k - 1\choose k - 1} + 2$, or equivalently ${2k - 1\choose k - 1} < {n - k - 1\choose k - 2} - (n - k) + 2$.
For $n \geq 3 k + 3$ the RHS is an increasing function of~$n$.
Thus it is sufficient to check the case $n = 3k + 3$:
$$
{2k - 1\choose k - 1} < {2k + 2\choose k - 2} - 2k - 1 = {2k + 1\choose k - 2} + \left({2k + 1\choose k - 3} - 2k - 1\right).
$$
This inequality is true by \eqref{eq:3.2} and $k - 3 \geq 1$.

Now let us turn to the case $k \geq 10$, $3k + 2 \geq n \geq 2\bigl(k + \sqrt{k} + 2\bigr)$.
Recall the definition of $r$ from \eqref{eq:5.7}.

Using \eqref{eq:2.1} and Corollary \ref{cor:2.5} we have
\beq
\label{eq:5.9}
\ell = \ell(\mathcal F) \leq \min \left\{{2k - 1\choose k - 1}, {n - r + 1\choose k - r + 2}\right\}.
\eeq

Let us first consider the case
$$
r < \sqrt{k} + 5.
$$
We are going to prove \eqref{eq:1.3} in the form
$$
|\mathcal F| \leq {n \! -\!  1\choose k \! -\!  1} \! -\!  {n \! -\!  r\choose k \! -\!  1} + {n - r\choose k \! -\!  r + 1} + {2k \! -\!  1\choose k - 1} \leq {n \! -\!  1\choose k \! -\!  1} \! -\!  {n \! -\!  k \! -\!  1\choose k - 1},
$$
or equivalently
\begin{small}\beq
\label{eq:5.10}
{n - r\choose k - r + 1} + {2k - 1\choose k - 1} \leq {n - r - 1\choose k - 2} + {n - r - 2\choose k - 2} + \ldots + {n - k - 1\choose k - 2}.
\eeq
\end{small}
We want to apply \eqref{eq:3.4} to the RHS.
Note that $n - s \geq 2k - 4$ is satisfied if $s \leq 2 \sqrt{k} + 8$.
Since $r < \sqrt{k} + 5$, $\bigl(2 - 2^{-\sqrt{k}}\bigr){n - r - 1\choose k - 2}$ is a lower bound for the RHS.
As to ${2k - 1\choose k - 1}$, in view of \eqref{eq:3.1} and \eqref{eq:3.3} it is very small, e.g.,
$$
{2k - 1\choose k - 1} < \text{\rm RHS} \times \left(\frac43\right)^{-\sqrt{k}}.
$$
As to the main term, ${n - r\choose k - r + 1}$, using $r \geq 4$ we have
\begin{align*}
{n - r\choose k - r + 1} &\leq {n - r\choose k - 3} = {n - r - 1\choose k - 2} \frac{(n - r)(k - 2)}{(n - r - k + 3)(n - r - k + 2)} \leq\\
&\leq \frac{n - 4}{n - 4 - (k - 3)} \cdot \frac{k - 2}{n - 4 - (k - 2)} {n - r - 1\choose k - 2}.
\end{align*}
Both factors in the coefficient of ${n - r - 1\choose k - 2}$ are decreasing functions of~$n$.
Thus the maximum is attained for $n = 2k + 2\sqrt{k} + 4$ and its value is
$$
\frac{2\bigl(k + \sqrt{k}\bigr)}{\bigl(k + \sqrt{k}\bigr) + \bigl(\sqrt{k} + 3\bigr)} \cdot \frac{k - 2}{k - 2 + 2\sqrt{k} + 2} \overset{\text{\rm def}}{=} h(k).
$$

To prove \eqref{eq:5.9} it is sufficient to show
$$
h(k) + \left(\frac43\right)^{-\sqrt{k}} < 2 - 2^{-\sqrt{k}}.
$$
Since
$$
h(k) < \frac{2}{1 + \frac1{\sqrt{k}}} \cdot \frac1{1 + \frac2{\sqrt{k}}} < 2 - \frac2{\sqrt{k}},
$$
we are done.

Let us now suppose that $\sqrt{k} + 5 \leq r < k$.
We want to establish \eqref{eq:1.3} in the form
$$
|\mathcal F| = \bigl|\mathcal F_0 \cup \mathcal T\bigr| + \ell(\mathcal F) < \bigl|\mathcal B_{r + 2}\bigr|.
$$
Using \eqref{eq:5.8} and \eqref{eq:5.9} one sees that the following inequality is sufficient:
$$
{n - r\choose k - r + 1} + {n - r + 1\choose k - r + 2} \leq {n - r - 1\choose k - 2} + {n - r - 2\choose k - 2}.
$$
This inequality is the sum of \eqref{eq:3.5} applied once for $r$ and once for $r + 1$.

The final subcase is $r = k$.
Using \eqref{eq:5.8} and \eqref{eq:5.9} we obtain
$$
|\mathcal F| \leq {n - 1\choose k - 1} - {n - k\choose k - 1} + {n - k\choose 1} + {n - k + 1\choose 2}.
$$
To show $|\mathcal F| < |\mathcal B^+|$ it is sufficient to show
\beq
\label{eq:5.11}
{n - k\choose 1} + {n - k + 1\choose 2} \leq {n - k - 1\choose 3} < {n - k - 1\choose k - 2} + 2.
\eeq
The second half of \eqref{eq:5.11} is evident from $k \geq 10$ and $n > 2k + 4$.
To show the first half note that
$$
{n - k + 1\choose 1} + {n - k + 1\choose 2} = {n - k + 2\choose 2} < 2{n - k - 1\choose 2},
$$
where the last inequality is true for $n - k - 1 \geq 8$.

On the other hand, for $n - k - 1 \geq 8$ one has also $2{n - k - 1\choose 2} \leq {n - k - 1\choose 3}$, concluding the proof of \eqref{eq:5.11}. \hfill $\square$ %%\end{proof}

$$
{n - 1\choose k - 1} - {n - k\choose k - 1} < \Delta(\mathcal F_0).
\leqno{\rm (c)}
$$
In view of Corollary \ref{cor:2.6} we have
\beq
\label{eq:5.12}
\bigl|\mathcal F_0(\bar 1)\bigr| + \ell(\mathcal F) \leq k - 1.
\eeq
On the other hand, having solved the case $\ell(\mathcal F) = 1$ in Section \ref{sec:1}, we know that $\ell(\mathcal F) \geq 2$.

The first two $k$-subsets of ${[2, n]\choose k}$ in the lexicographic order are $[2, k + 1]$ and $[2, k] \cup \{k + 2\}$.
Using Theorem \ref{th:2.4} we infer
\beq
\label{eq:5.13}
\bigl|\mathcal F_0(1)\bigr| \leq {n - 1\choose k - 1} - {n - k\choose k - 1} + {n - k - 2\choose k - 2}.
\eeq
Adding \eqref{eq:5.12}, \eqref{eq:5.13} and using $\ell(\mathcal F) \leq k - 1$ we obtain
$$
|\mathcal F| \leq {n - 1\choose k - 1} - {n - k\choose k - 1} + {n - k - 2\choose k - 2} + 2(k - 1).
$$
To prove \eqref{eq:1.3} we need
$$
{n - k - 2\choose k - 2} + 2(k - 1) < {n - k\choose k - 1} - {n - k - 1\choose k - 1} + 2.
$$
Rearranging yields
$$
2(k - 1) < {n - k - 2\choose k - 3} + 2.
$$
For $k = 4$ this is simply
$$
6 < (n - 6) + 2, \ \ \ \ \text{ i.e., } \ \ \ n \geq 11.
$$
For $k \geq 5$, $k - 3\geq 2$ and therefore
$$
{n - k - 2\choose 2} > 2(k - 2) \ \ \ \text{ is sufficient.}
$$
This inequality is satisfied for $n \geq 2k + 2$.
Indeed,
$$
{k\choose 2} = \frac{k}{2} (k - 1) > 2(k - 2) \ \ \ \text{ already for } \ k \geq 3.
$$

This concludes the entire proof. \hfill $\square$\\

{\sc Acknowledgements. } We thank the anonymous referees for carefully reading the paper and providing us with their comments. The authors acknowledge the financial support from the Ministry of Education and Science of the Russian Federation in the framework of MegaGrant no 075-15-2019-1926. The second author was partially supported  by RFBR, project number 20-31-70039 and the Council for the Support of Leading Scientific Schools of the President of the
Russian Federation (grant no. N.Sh.-2540.2020.1).

\frenchspacing

\end{document}